\numberwithin{equation}{section} 
\newcommand{\id}{\operatorname{id}}
\newcommand{\ind}{\operatorname{ind}}
\newcommand{\res}{\operatorname{res}}
\newtheorem{theorem}{Theorem}[section]
\newtheorem{lemma}[theorem]{Lemma}
\newtheorem{remark}[theorem]{Remark}
\newtheorem{proposition}[theorem]{Proposition}
\title[Families over the integral Bernstein Center and Tate
cohomology]{Families over the integral Bernstein Center and Tate
  cohomology of local Base change lifts for ${\rm GL}_n(F)$}
\author{Sabyasachi Dhar and Santosh Nadimpalli} \date{\today}
\begin{document}
\maketitle
\begin{abstract}
  Let $p$ and $l$ be distinct odd primes, and let $F$ be a 
  $p$-adic field. Let $\pi$ be a generic
  smooth integral representation of ${\rm GL}_n(F)$ over an
  $\overline{\mathbb{Q}}_l$-vector space.  Let $E$ be a
  finite Galois extension of $F$ with $[E:F]=l$.  Let $\Pi$ be the
  base change lift of $\pi$ to the group ${\rm GL}_n(E)$. Let $\mathbb{W}^0(\Pi, \psi_E)$ be the
  lattice of $\overline{\mathbb{Z}}_l$-valued functions in the
  Whittaker model of $\Pi$, with respect to a standard
  ${\rm Gal}(E/F)$-equivaraint additive character
  $\psi_E:E\rightarrow \overline{\mathbb{Q}}_l^\times$. We show that
  the unique generic sub-quotient of the zero-th Tate cohomology group
  of $\mathbb{W}^0(\Pi, \psi_E)$ is isomorphic to the Frobenius twist
  of the unique generic sub-quotient of the mod-$l$ reduction of
  $\pi$. We first prove a version of this result for a family of
  smooth generic representations of ${\rm GL}_n(E)$ over the integral
  Bernstein center of ${\rm GL}_n(F)$. Our methods use the theory of
  Rankin-selberg convolutions and simple identities of local
  $\gamma$-factors. The results of this article remove the hypothesis
  that $l$ does not divide the pro-order of ${\rm GL}_{n-1}(F)$ in our
  previous work \cite{dhar2022tate}.
  \end{abstract}

\section{Introduction}
Let $G$ be a reductive group defined over a number field $K$.  Let
$\sigma$ be an order $l$ automorphism of $G$ defined over $K$, and let
$G^\sigma$ be the connected component of the fixed points of
$\sigma$.  In the seminal paper \cite{Venkatesh_Smith_theory},
D.Treumann and A. Venkatesh established the functoriality lifting of a
mod-$l$ automorphic form on $G^\sigma$ to a mod-$l$ automorphic form
on $G$; here mod-$l$ automorphic form is defined as a Hecke
eigenclass in the cohomology of congruence subgroup with
$k$-coefficients.  At the same time, they also made some conjectures for
representation theory of $p$-adic groups, and these conjectures
predict that local (Langlands) functoriality is compatible with Tate
cohomology for the action of $\langle \sigma\rangle$ (see
\cite[Section 6.3]{Venkatesh_Smith_theory}).  After initial
evidence in the context of local cyclic base change for depth-zero
cuspidal representations due to Ronchetti (\cite{Ronchetti_local_bc}),
Feng (\cite{Feng_2024}, \cite{feng2023modular}) made remarkable
progress towards these conjectures--using the recent advances due to
the works of V. Lafforgue and Fargues-Scholze. In this article, using
completely different line of arguments, we approach the conjectures in 
\cite[Section 6.3]{Venkatesh_Smith_theory} in the setting of the local Base change
lifting. We use local Rankin-Selberg zeta functions and their functional equations to study the Tate cohomology and linkage
principle. Our approach is in the
spirit of local converse theorems.  In our earlier work
\cite{dhar2022tate}, we assumed that the prime $l$ is large. In this
article, exploiting the theory of smooth representations in families,
we remove this hypothesis.

Let $(\Pi, W)$ be an irreducible smooth
$\overline{\mathbb{F}}_l$-representation of $G(F)$, where $G$ is a connected reductive group
over a $p$-adic field $F$.  Say $\sigma \in {\rm Aut}(G)$ is of order
$l$ and that $\Pi^\sigma$ is isomorphic to $\Pi$. The Tate cohomology
group $\widehat{H}^i(\sigma, V)$ is a representation of
$G^\sigma(F)$. Any irreducible sub-quotient $\pi$ of
$\widehat{H}^i(\sigma, W)$ is defined to be linked with $\Pi$. In the
article \cite{Venkatesh_Smith_theory}, the authors discovered that
linkage is compatible with local functoriality. In this
paper, we prove the Linkage conjectures in the local Base change
setting of ${\rm GL}_n$ for all odd $l$. The authors proved the main
theorem under the restriction that $l$ does not divide the pro-order
of ${\rm GL}_{n-1}(F)$. By using the theory of Co-Whittaker modules
and integral Bernstein center, developed by Emerton and Helm
(\cite{Emerton_Helm_LLC_families}, \cite{Curtis_Helm}) in an essential
way, we remove this hypothesis on $l$.

To state the main results of this article, some notations are in
order. Let $F$ be a finite extension of $\mathbb{Q}_p$ and let $E/F$
be a finite Galois extension of prime degree $l$. Let
$\overline{\mathcal{K}}$ be the algebraic closure of the fraction
field of $W(\overline{\mathbb{F}}_l)$ with ring of integers
$\mathcal{O}$. Let $(\pi, V)$ be an irreducible smooth representation
of ${\rm GL}_n(F)$, where $V$ is a vector space over
$\overline{\mathcal{K}}$.  We assume that $(\pi, V)$ is an integral
representation, i.e., $V$ admits a ${\rm GL}_n(F)$-invariant
$\mathcal{O}$-lattice.  Let $(\Pi, W)$ be the base change lifting of
$\pi$ to ${\rm GL}_n(E)$. The representation $\Pi$ is isomorphic to
$\Pi^\gamma$, where $\gamma$ is any generator of ${\rm Gal}(E/F)$. Let
us fix an isomorphism $T:\Pi\rightarrow \Pi^\gamma$ such that
$T^l=\id$. Note that $\Pi$ is an integral representation of
${\rm GL}_n(E)$.

If $\pi$ is generic, then $\Pi$ is also a generic representation. Let
us fix an additive character $\psi_F$ of $F$ and let $\psi_E$ be the
composition $\psi_F\circ{\rm Tr}$, where ${\rm Tr}$ is the trace map
of the extension $E/F$. Let $\mathbb{W}(\pi, \psi_F)$
(resp. $\mathbb{W}(\Pi, \psi_E)$) be the Whittaker model of $\pi$
(resp. $\Pi$). Vigneras showed that the $\mathcal{O}$-module
$\mathbb{W}^0(\Pi, \psi_E)$ is a ${\rm GL}_n(E)$-stable lattice and by
multiplicity one results, $\mathbb{W}^0(\Pi, \psi_E)$ is stable under
the Galois action of ${\rm Gal}(E/F)$ induced by its natural action on
${\rm GL}_n(E)$. Note that there exists a unique generic sub-quotient
of the mod-$l$ reduction of a generic integral representation $\pi$ of
${\rm GL}_n(K)$, where $K$ is a $p$-adic field and this is denoted by
$J_l(\pi)$.  It is easy to show that the Tate cohomology of
$\widehat{H}^0(\mathbb{W}^0(\Pi, \psi_E))$ has a unique generic
sub-quotient as a representation of ${\rm GL}_n(F)$ (\cite[Proposition
6.3]{dhar2022tate}). Let $V$ be a vector space over
$\overline{\mathbb{F}}_l$, and let $V^{(l)}$ be the vector space
$V\otimes_{{\rm Fr}}\overline{\mathbb{F}}_l$ where ${\rm Fr}$ be the
Frobenius automorphism of $\overline{\mathbb{F}}_l$. Ronchetti (\cite[Theorem 6]{Ronchetti_local_bc}) proved that the Tate cohomology group $\widehat{H}^1(\mathcal{L})$ is zero
for any ${\rm GL}_n(E)$ and ${\rm Gal}(E/F)$ stable lattice $\mathcal{L}$ in a cuspidal representation. When $l$ does not divide the pro-order of ${\rm GL}_n(F)$,
the authors showed the same vanishing result for generic representations (\cite[Corollary 8.2]{dhar2022tate}). We prove the
following theorem on the zeroth Tate cohomology:
\begin{theorem}\label{intro_main_thm}
  Let $l$ and $p$ be two distinct odd primes. Let $F$ be a p-adic
  field and let $E/F$ be a Galois extension of degree $l$.  Let $\pi$
  be an integral smooth generic representation of ${\rm GL}_n(F)$ over
  a $\overline{\mathbb{Q}}_l$-vector space.  Let $\Pi$ be a base
  change lifting of $\pi$ to ${\rm GL}_n(E)$. The unique generic
  sub-quotient of the Tate cohomology group
  $\widehat{H}^0(\mathbb{W}^0(\Pi, \psi_E))$ is isomorphic to
  $J_l(\pi)^{(l)}$.
\end{theorem}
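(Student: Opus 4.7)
The plan is to lift the picture to the integral Bernstein center of $\gl_n(F)$, prove a universal version of the theorem for families of co-Whittaker modules, and then specialize at the closed point corresponding to $\pi$. Let $\mathfrak{s}$ be the inertial equivalence class of the supercuspidal support of $\pi$, and let $\mathcal{Z}_{\mathfrak{s}}$ be the corresponding component of the integral Bernstein center of $\gl_n(F)$. By the Emerton-Helm-Curtis theory, there is a universal co-Whittaker $\gl_n(F)$-module $\mathcal{V}(\mathfrak{s})$ over $\mathcal{Z}_{\mathfrak{s}}$ whose specialization at the maximal ideal of $\pi$ recovers $\mathbb{W}(\pi,\psi_F)$. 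Base change transports this to a family $\widetilde{\mathcal{V}}(\mathfrak{s})$ of co-Whittaker $\gl_n(E)$-modules carrying a ${\rm Gal}(E/F)$-action whose specialization at $\pi$ is the Galois-equivariant lattice $\mathbb{W}^0(\Pi,\psi_E)$. The universal Tate cohomology $\widehat{H}^0(\widetilde{\mathcal{V}}(\mathfrak{s}))$ is then a module over $\mathcal{Z}_{\mathfrak{s}}/l\mathcal{Z}_{\mathfrak{s}}$, and identifying its generic cosocle at every prime of this quotient will imply the theorem at the $\pi$-point.

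The identification should go through Rankin-Selberg $\gamma$-factors in families, as developed by Moss. For any generic $\overline{\mathbb{F}}_l$-representation $\tau$ of $\gl_m(F)$ with $1\le m\le n-1$, the base-change factorization
\[
\gamma(\Pi\times\tau_E,\,s,\,\psi_E) \;=\; \prod_{i=0}^{l-1}\gamma(\pi\times\tau^{\gamma^i},\,s,\,\psi_F)
\]
is an identity of universal $\gamma$-factors over $\mathcal{Z}_{\mathfrak{s}}$ that, after reduction modulo $l$ and using that the cyclic product on the right telescopes into an $l$-th power on $\overline{\mathbb{F}}_l$-coefficients, should produce an identity of the shape
\[
\gamma\bigl(\widehat{H}^0(\widetilde{\mathcal{V}}(\mathfrak{s}))\times\tau,\,X,\,\psi_F\bigr) \;=\; \gamma\bigl(\mathcal{V}(\mathfrak{s})^{(l)}\times\tau,\,X,\,\psi_F\bigr).
\]
Applying the local converse theorem of Helm-Moss for co-Whittaker families then forces the generic cosocles of the two sides to be isomorphic, and specialization at the maximal ideal of $\pi$ yields $\widehat{H}^0(\mathbb{W}^0(\Pi,\psi_E))^{\rm gen}\cong J_l(\pi)^{(l)}$.

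The main obstacle will be showing that $\widehat{H}^0$ of the universal Galois-stable Whittaker lattice is again a co-Whittaker module over $\mathcal{Z}_{\mathfrak{s}}/l\mathcal{Z}_{\mathfrak{s}}$ whose universal $\gamma$-factors satisfy the displayed identity at every geometric point. Tate cohomology is merely a subquotient of the lattice, so the mirabolic-freeness and the Whittaker functional required by Helm's machinery do not pass through automatically; one has to reconstruct them intrinsically, which seems to demand a careful analysis of the Galois action on the Bushnell-Kutzko cover and of the Kirillov structure of the family. In the large-$l$ regime of \cite{dhar2022tate}, this was sidestepped by the cohomological triviality of suitable compact open subgroups of $\gl_{n-1}(F)$; removing that hypothesis amounts precisely to replacing pointwise vanishing by the rigidity of universal $\gamma$-factors over the Bernstein center, which is the structural advantage that the family perspective is meant to supply.
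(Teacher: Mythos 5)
Your high-level framework is on the right track: lifting to the integral Bernstein center, transporting via the base change map on centers, $\gamma$-factors of pairs in families, and the base change factorization of $\gamma$-factors which collapses to an $l$-th power modulo $l$ --- all of these appear in the paper's argument. But the mechanism you propose has a gap that the paper does not incur, and you yourself flag it without resolving it: you want $\widehat{H}^0$ of the universal Galois-stable Whittaker lattice to be a co-Whittaker module so that the Helm--Moss converse theorem applies to it directly. The paper never proves this (and it need not be true); your gesture toward Galois actions on Bushnell--Kutzko covers is not how the difficulty is handled.

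What the paper does instead is to avoid characterizing $\widehat{H}^0$ as a co-Whittaker module altogether. Working in the Kirillov model of the family $\mathcal{V}=e_{\mathfrak{r}}W_n^E\otimes_{A_E}A_F$, one builds a $P_n(F)$-equivariant map $\Phi_n$ from a $P_n(F)$-stable subspace $\mathcal{M}(\psi_F)$ of $\widehat{H}^0(\mathbb{K}(\mathcal{V}))$ onto the Kirillov space of the \emph{known} co-Whittaker module $\mathcal{V}_F^{(l)}=(e_{\mathfrak{s}}W_n^F\otimes_\Lambda\mathbb{F})^{(l)}$, and then upgrades $P_n(F)$-equivariance to $G_n(F)$-equivariance by showing $\Phi_n$ intertwines the action of $w_n$. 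This last step is where the Rankin--Selberg functional equations and the completeness of Whittaker models in families (Theorem \ref{van_result}) enter, reducing the intertwining statement to an identity of $\gamma$-factors, which is then verified via the commutativity $\mathbb{L}_\nu^F\circ z_{E/F}=B_{E/F}\circ\mathbb{L}_\nu^E$ and inductivity of $\gamma$-factors. Crucially, this whole argument is an induction on $n$: on the $(n-1)$-side the auxiliary Whittaker function $W'$ producing a prescribed mod-$l$ Whittaker function $V'$ is supplied by Theorem \ref{Tate_coWhittaker} at rank $n-1$. Your proposal contains no such inductive structure, and without it the $\gamma$-factor identity you write down does not directly produce a $G_n(F)$-equivariant identification. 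A second, smaller point: your factorization $\prod_{i}\gamma(\pi\times\tau^{\gamma^i},\psi_F)$ is not the correct shape; the relevant identity is $\gamma(X,\pi_E,\mathcal{V}',\psi_E)=\prod_{\eta}\gamma(X,\pi_F,e_{\mathfrak{s}'}W_{n-1}^F\otimes\eta,\psi_F)$ with $\eta$ running over characters of ${\rm Gal}(E/F)$, coming from $\operatorname{ind}_{W_E}^{W_F}\operatorname{res}_{W_E}=\otimes\operatorname{ind}_{W_E}^{W_F}\mathbf{1}$.
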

In our previous work (\cite{dhar2022tate}), we proved the above theorem
when $l$ does not divide the pro-order of ${\rm GL}_{n-1}(F)$.  We use
the theory of Rankin--Selberg zeta integrals and their functional
equations. The hypothesis on the prime $l$ in \cite{dhar2022tate} is
needed for the completeness of mod-$l$ Whittaker models. In order to
remove this hypothesis on $l$, we use the theory of smooth
representations in families due to Emerton and Helm. This approach
deals with the nilpotents which arise in the mod-$l$ Zeta
integrals. So, we first prove a version of the above theorem over
families and use it to prove the above theorem. In order to define the
correct family, we needed the work of Helm on local converse theorem,
where he defines a map between the integral Bernstein center and a
certain deformation ring. This gave us the notation of Base change
lifting map between Bernstein centers.

We will begin with the description of our result on Tate cohomology of
a family of smooth representations of ${\rm GL}_n(E)$. Let
$\mathcal{M}_n(K)$ be the category of smooth
$W(\overline{\mathbb{F}}_l)[{\rm GL}_n(K)]$-modules, where $K$ is a
$p$-adic field, and the center of the category $\mathcal{M}_n(K)$ is
denoted by $\mathcal{Z}_n(K)$. The primitive idempotents of
$\mathcal{Z}_n(K)$ corresponds to an inertial class $[L, \sigma]$,
where $\sigma$ is a supercuspidal $\overline{\mathbb{F}}_l$
representation of the Levi-subgroup $L$ of ${\rm GL}_n(K)$
(\cite[Section 12]{Helm_Bernstein_center}). Let $e_\mathfrak{s}$ be a
primitive idempotent of $\mathcal{Z}_n(F)$. Using mod-$l$ semisimple
Langlands correspondence of Vigneras (\cite[Theorem
1.6]{Vigneras_semisimple_mod-l}), we get a smooth representation
$$\rho:W_F\rightarrow {\rm GL}_n(\overline{\mathbb{F}}_l)$$
associated with $(L, \sigma)$. Let $e_\mathfrak{r}$ be the primitive
idempotent in $\mathcal{Z}_n(E)$ associated with
$\res_{W_E}\rho$. Using Helm's construction of a homomorphism
between irreducible component of Bernstein centre to a deformation
ring, we obtain a map
$$z_{E/F}:e_{\mathfrak{r}}\mathcal{Z}_n(E)
\rightarrow e_\mathfrak{s}\mathcal{Z}_n(F)$$
which interpolates the cuspidal
support of Base change lifting (\cite[Chapter 1]{Arthur_Clozel_BC}).

To interpolate those generic representations of ${\rm GL}_n(E)$
which arise as a base change lift of representations with a given
cuspidal support, we  define
$$\mathcal{V}=(e_\mathfrak{r}\ind_{N_n(E)}^{{\rm GL}_n(E)}\psi_E^{-1})
\otimes_{z_{E/F}}e_\mathfrak{s}\mathcal{Z}_n(F).$$ The module
$\mathcal{V}$ is a co-Whittaker
$e_\mathfrak{s}\mathcal{Z}_n(F)[{\rm GL}_n(F)]$-module.  The theory of
Rankin-selberg $\gamma$-factors in families show that the
representation $\mathcal{V}$ is isomorphic to $\mathcal{V}^\gamma$,
for all $\gamma\in {\rm Gal}(E/F)$. Thus, the space of Whittaker
functions (containing functions on ${\rm GL}_n(E)/N_n(E)$ with values
in $e_\mathfrak{s}\mathcal{Z}_n$) is invariant under the action of
${\rm Gal}(E/F)$. We then prove that the Tate cohomology group
${\widehat{H}}^0(\mathbb{W}(\mathcal{V}, \psi_E))$ realises
$$[e_\mathfrak{s}(\ind_{N_n(F)}^{{\rm GL}_n(F)}\psi_F^{-1})
\otimes_{W(\overline{\mathbb{F}}_l)} \overline{\mathbb{F}}_l]^{(l)}$$
as a sub-quotient. The proof of this result is similar to our previous
result in \cite[Theorem 6.7]{dhar2022tate}, where we use
Rankin-Selberg local zeta integrals and their liftings from
homogeneous spaces over $F$ to the corresponding spaces over $E$.

Theorem \ref{intro_main_thm} is proved by a specialization
argument. The proofs of this theorem and the previous version in
families are inspired from the proofs of local converse theorems, as
in \cite[Proposition 7.5.2]{Aut_forms_GL3(I)} and \cite[Theorem
1.1]{Henniart_conv_thm}. Note that the above arguments in families and
our specialization arguments essentially use Kirillov models and
completeness of Whittaker models. The following is the essential
content. Let $X_F$ be the space ${\rm GL}_{n-1}(F)/N_{n-1}(F)$, and
let $H\in C_c(X_F, \overline{\mathbb{F}}_l)$. Typically, for some
fixed integer $k$ and a suitably chosen Whittaker function $W$ in the
Kirrilov model of $\Pi$, the function $H$ is equal to
$$[\widehat{H}^0(\pi_E)(w)-\pi_F^{(l)}(w)]W\left(
\begin{pmatrix}g&0\\0&1\end{pmatrix}\right)$$
when ${\rm det}(g)=k$ and zero otherwise. Essentially we need
to show that $H$ is zero. If not,
there exists a primitive idempotent 
$e_\mathfrak{s}\in \mathcal{Z}_n(F)$ and a
$e_\mathfrak{s}\mathcal{Z}_n(F)$-valued Whittaker function 
$$W'\in \mathbb{W}(e_\mathfrak{s}
\ind_{N_{n-1}(F)}^{{\rm GL}_{n-1}(F)}\psi_F^{-1} , \psi_F^{-1})$$ such
that $$\int_{X_F}H(g)\otimes W'(g)\,d\mu\neq 0.$$ When the ring
$e_\mathfrak{s}\mathcal{Z}_n(F)\otimes \overline{\mathbb{F}}_l$ is
reduced, we can simply use Rankin-Selberg theory of mod-$l$
representations to prove the vanishing of $H$ and deduce our main
theorem. This has been the main point of our work \cite[Theorem
6.7]{dhar2022tate}. In this article, to overcome the problems with the
reducibility of the ring
$e_\mathfrak{s}\mathcal{Z}_n(F)\otimes \overline{\mathbb{F}}_l$, we
use our result on families.

\section{Integral Bernstein Center and Base change}
In this section, we recall the theory of integral Bernstein centers
and define a base change map between Bernstein centers--which is
compatible with the local base change lifting of irreducible smooth
representations of ${\rm GL}_n$. We essentially follow the works of
Emerton--Helm (\cite{Emerton_Helm_LLC_families}) and Helm
(\cite{Curtis_Helm}).
\subsection{}
For any finite extension $K$ of $\mathbb{Q}_p$, we denote by
$\mathfrak{o}_K$ its ring of integers and the maximal ideal of
$\mathfrak{o}_K$ is denoted by $\mathfrak{p}_K$. Let $q_K$ be the
cardinality of the residue field $\mathfrak{o}_K/\mathfrak{p}_K$. The
Weil group of $K$ is denoted by $W_K$ and its inertia subgroup is
denoted by $I_K$. The prime to $l$-subgroup of $I_K$ is denoted by
$I_K^{(l)}$. We denote by $G_n(K)$ the group ${\rm GL}_n(K)$ and it is
equipped with the natural topology induced from that of $K$.  Let $F$
be a finite extension of $\mathbb{Q}_p$ and let $E$ be a finite Galois
extension of prime degree $l$, where $p$ and $l$ are distinct odd
primes. The Galois group ${\rm Gal}(E/F)$ is denoted by $\Gamma$. Let
$\mathbb{F}$ be the algebraic closure of the finite field of $l$
elements and let $W(\mathbb{F})$ be the ring of Witt vectors; we use
the notation $\Lambda$ for $W(\mathbb{F})$. Let
$\overline{\mathcal{K}}$ be the algebraic closure of the field of
fraction of $\Lambda$ with ring of integers $\mathcal{O}$. Let
$\psi_F : F\rightarrow \Lambda^\times$ be a non-trivial additive
character and let $\psi_E$ be the character
$\psi_F\circ {\rm Tr}_{E/F}$, where ${\rm Tr}_{E/F}$ denotes the trace
map of the extension $E/F$.
\subsection{}
Let $R$ be a commutative ring with unity and let $G$ be the
$K$-rational points of a reductive algebraic group defined over
$K$. We denote by $\mathcal{M}_R(G)$ the category of smooth
$R[G]$-modules (the stabilizer of every element under the action of
$G$ is open), and the Bernstein center of $\mathcal{M}_R(G)$ is
denoted by $\mathcal{Z}_R(G)$. We assume that $G$ is the group
$G_n(K)$. We use the notation $\mathcal{Z}_n^K$ for the ring
$\mathcal{Z}_{\Lambda}(G_n(K))$. Let $P$ be a parabolic subgroup of
$G$. Let $M$ be a Levi-subgroup of $P$ and let $U$ be the unipotent
radical of $P$. Let $(\sigma, W)$ be a supercuspidal representation of
$M$, where $W$ is an $\mathbb{F}$-vector space. Let
$\mathfrak{s}=[M, \sigma]$ be the inertial equivalence class
containing the pair $(M, \sigma)$. Let $\mathcal{M}_\mathfrak{s}(G)$
be the full subcategory of $\mathcal{M}_\Lambda(G)$ consisting of all
smooth $\Lambda[G]$ modules such that any irreducible subquotient has
mod-$l$ (see \cite[Definition 4.12]{Helm_Bernstein_center}) inertial
supercuspidal support equal to $\mathfrak{s}$. Given any primitive
idempotent $e$ of $\mathcal{Z}_n^K$, there exists a unique inertial
equivalence class $\mathfrak{s}$ such that the Bernstein center of
$\mathcal{M}_\mathfrak{s}(G)$ is equal to $e\mathcal{Z}_n^K$. The
primitive idempotents of $\mathcal{Z}_n^K$ correspond to the inertial
classes of the form $[M, \sigma]$, where $\sigma$ is a supercuspidal
representation of $M$ over an $\mathbb{F}$-vector space.

\subsection{}
Helm in his foundational work \cite{Curtis_Helm} described integral
Bernstein center in terms of local Galois deformation rings. He
constructed a map between connected component of the integral
Bernstein center and a local Galois deformation ring which
interpolates the local Langlands correspondence. Helm's constructions
naturally defines a map between Bernstein centers $\mathcal{Z}_n^E$
and $\mathcal{Z}_n^F$, which interpolates the compatibility of
cuspidal support maps of base change lifting. For the cyclic extension
$E/F$, the base change lifting of smooth irreducible representations
of $G$ over complex vector spaces is characterized via certain
character identities (see \cite[Chapter 3]{Arthur_Clozel_BC}). There
is a relation between the local base change lifting of irreducible
smooth $\overline{\mathcal{K}}$-representations of $G$ and the local
Langlands correspondence over $\overline{\mathcal{K}}$. Here, we use
the local Langlands correspondence by fixing the normalizations as in
\cite[Section 4.2]{Emerton_Helm_LLC_families}.
\subsubsection{}
An $l$-inertial type is a representation
$\nu:I_K^{(l)}\rightarrow {\rm GL}_n(\Lambda)$ that extends to a
representation of $\mathcal{W}_F$. Let $\mathfrak{s}$ be the inertial
equivalence class $[M, \sigma]$.  where $M$ is a Levi subgroup of $G$
and $\sigma$ is a supercuspidal representation of $M$ over an
$\mathbb{F}$-vector space. Vigneras' construction (\cite[Theorem
1.6]{Vigneras_semisimple_mod-l}) of mod-$l$ local Langlands
correspondence attaches a unique semisimple representation
$\overline{\rho}:W_K\rightarrow {\rm GL}_n(\mathbb{F})$ with the pair
$(M, \sigma)$
$$
(M, \sigma)\leftrightsquigarrow\overline{\rho}\ (\text{mod}-l\ \text{LLC})
$$
Let $\overline{\tau}$ be an irreducible representation of $I_K^{(l)}$
over an $\mathbb{F}$-vector space, and let $\tau$ be the (unique) lift
of $\overline{\tau}$ over $\Lambda$. Given any $\Lambda$-algebra $A$
and a representation $\rho:W_K\rightarrow {\rm GL}_n(A)$, we have
$$
\rho_A\simeq \bigoplus_{[\overline{\tau}]}({\rm Hom}_A(\tau,
\rho_A)\otimes \tau).
$$
The direct sum is over the $W_K$-conjugacy classes of
$I_K^{(l)}$-representations, denoted as $[\overline{\tau}]$.  The
space ${\rm Hom}_A(\tau, \rho)$ is a free $A$-module and a
pseudoframing of $\rho_A$ is a choice of a basis for each
${\rm Hom}_A(\tau, \rho_A)$. In the article \cite[Section
8]{Curtis_Helm}, the author constructed a $\Lambda$-algebra $R_\nu$
corresponding to an $l$-inertial type $\nu$--which is universal for
the pseudoframed deformation $\rho_A:W_K\rightarrow {\rm GL}_n(A)$,
i.e., for any choice of basis of ${\rm Hom}_{I_K^{(l)}}(\tau, \rho_A)$
which lifts a basis of
${\rm Hom}_{I_K^{(l)}}(\overline{\tau}, \overline{\rho})$ for each
$W_F$-conjugacy classes of $I_K^{(l)}$-representations, denoted by
$[\overline{\tau}]$, there exists a map $R_\nu^K\rightarrow A$ such
that the pseudoframed deformation $\rho_A$ is obtained as a base
change of the universal pseudoframed deformation $\rho_\nu^K$. The
affine scheme corresponding to the deformation ring $R_\nu^K$ is
equipped with an action of a group $G_\nu^K$, defined in \cite[Section
8]{Curtis_Helm}, which acts by change of framing; we recall the
precise definition of $G_\nu^K$ in the proof of Lemma
\ref{Galois_base_change_map}. This action gives a space of invariance
$(R_\nu^K)^{\rm inv}$, which is infact a subalgebra of $R_\nu^K$. The
space $(R_\nu^K)^{\rm inv}$ is useful to construct the base change map
between the integral Bernstein centers, as we will see in the next
subsections.
\subsubsection{}
In \cite[Section 7]{Conv_thm_Helm_Moss}, the authors constructed a map
$$
\mathbb{L}^K_\nu :
e\mathcal{Z}_n^K\rightarrow R^K_\nu
$$ 
which is compatible with the local Langlands correspondance, i.e., for
any morphism $$x:R_\nu^K\rightarrow\overline{\mathcal{K}},$$ the
morphism
$$x\circ \mathbb{L}_\nu^K:e\mathcal{Z}_n^K
\rightarrow \overline{\mathcal{K}}$$ gives the action of
$e\mathcal{Z}_n^K$ on the representation $\pi_x$--associated with
$\rho_x$ via the local Langlands correspondence. The map
$\mathbb{L}_\nu^K$ identifies the algebra $e\mathcal{Z}_n^K$ with the
$\mathcal{G}_\nu$-fixed points of $R_\nu^K$. This morphism plays a
fundamental role in proving the local Langlands in families by Emerton
and Helm (\cite[Conjecture 1.3.1]{Emerton_Helm_LLC_families}). Let $F$
be a finite extension of $\mathbb{Q}_p$ and let $E$ be a finite
extension of $F$ of degree $l$ with $l\neq p$. Let
$\nu:I_F^{(l)}\rightarrow {\rm GL}_n(\Lambda)$ be an $l$-inertial
type. As the group $I_F^{(l)}$ is equal to $I_E^{(l)}$, any
pseudo-framed deformation $\rho_A:W_F\rightarrow {\rm GL}_n(\Lambda)$
also determines a pseudoframed deformation
$\rho_A:W_E\rightarrow {\rm GL}_n(\Lambda)$ by restriction and hence
we get a map
\begin{equation}
B_{E/F}:R_\nu^E\rightarrow R_\nu^F. 
\end{equation}
\begin{lemma}\label{Galois_base_change_map}
The map $B_{E/F}$ induces a map between $(R_\nu^E)^{\rm inv}$ and
$(R_\nu^F)^{\rm inv}$.
\end{lemma}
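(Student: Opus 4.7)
The plan is to construct a natural group homomorphism $\phi\colon G_\nu^F\to G_\nu^E$ and to verify that the base-change map $B_{E/F}\colon R_\nu^E\to R_\nu^F$ is $\phi$-equivariant, in the sense that $h\cdot B_{E/F}(f)=B_{E/F}(\phi(h)\cdot f)$ for every $h\in G_\nu^F$ and $f\in R_\nu^E$. Granted this, the lemma is immediate: if $f$ is $G_\nu^E$-invariant then $\phi(h)\cdot f=f$, so $B_{E/F}(f)$ is fixed by every $h\in G_\nu^F$.

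Recall $G_\nu^K=\prod_{[\overline{\tau}]_{W_K}}\gl_{m_{\overline{\tau}}}(\Lambda)$, the product indexed by $W_K$-conjugacy classes of irreducible $I_K^{(l)}$-representations appearing in the residual type, the factor at $[\overline{\tau}]_{W_K}$ acting by change of basis on a chosen basis of $\ho_{I_K^{(l)}}(\tau,\rho_\nu^K)$. Because $I_F^{(l)}=I_E^{(l)}$, these Hom spaces coincide on the $F$- and $E$-sides; only the grouping by conjugacy classes differs. Since $[W_F:W_E]=l$ is prime and $W_E$ is normal in $W_F$, each $W_F$-orbit $[\overline{\tau}]_F$ decomposes under $W_E$ into either a single orbit or exactly $l$ orbits, say $[\overline{\tau}]_F=\bigsqcup_{i=1}^{r}[\overline{\tau}_i]_E$ with $r\in\{1,l\}$. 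Choosing representatives $\tau_i=w_i\tau$ with $w_i\in W_F$, the $\Lambda$-linear isomorphisms $\rho_A(w_i)\colon \ho_{I^{(l)}}(\tau,\rho_A)\xrightarrow{\sim}\ho_{I^{(l)}}(\tau_i,\rho_A)$ force $m_{\overline{\tau}_i}=m_{\overline{\tau}}$ for every $i$.

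These same transports define $\phi$ factor-by-factor as the diagonal embedding $\gl_{m_{\overline{\tau}}}\hookrightarrow\prod_{i=1}^{r}\gl_{m_{\overline{\tau}}}$, and they supply the recipe by which a $W_F$-pseudoframing on $\rho_A$ restricts to a $W_E$-pseudoframing on $\rho_A|_{W_E}$: one transports the chosen basis along each $w_i$. Consequently the associated morphism of schemes sends a $W_F$-framed deformation $(\rho_A,\{B_{\overline{\tau}}\})$ to the $W_E$-framed deformation $(\rho_A|_{W_E},\{\rho_A(w_i)B_{\overline{\tau}}\}_i)$. Acting on $\{B_{\overline{\tau}}\}$ by $g$ changes each transported basis by the same $g$, since the maps $\rho_A(w_i)$ are independent of the framing; this is exactly $\phi$-equivariance on $\mathrm{Spec}$, which dualises to give the desired relation on rings and hence the lemma.

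The main point requiring care is that the recipe is well-defined up to the $G_\nu^E$-action despite the auxiliary choice of transversal $\{w_i\}$: a different choice alters each transported basis by a $\Lambda$-linear automorphism of the relevant Hom-space, which is absorbed into the corresponding factor of $G_\nu^E$. This shows the induced map to $\mathrm{Spec}\,R_\nu^E$ is well-defined at the level of $G_\nu^E$-orbits, and is exactly what is needed to conclude that the map on invariants makes sense.
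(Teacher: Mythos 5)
Your argument is correct and follows the same strategy as the paper's proof: both exploit the tensor-product decomposition of $R_\nu^K$ indexed by conjugacy classes of $I_K^{(l)}$-types and the resulting identification of $\mathcal{G}_\nu^F$ with a subgroup of $\mathcal{G}_\nu^E$ obtained from the fact that each $W_F$-orbit is a union of $W_E$-orbits. Your write-up is in fact more careful than the paper's: you spell out the diagonal embedding $\mathcal{G}_\nu^F\hookrightarrow\mathcal{G}_\nu^E$ (the paper's bare assertion ``$\mathcal{G}_\nu^F$ is contained in $\mathcal{G}_\nu^E$'' via $S_F\subset S_E$ would be the wrong, non-diagonal inclusion if read literally) and you verify the equivariance of $B_{E/F}$, which is the step the paper leaves implicit when it jumps from the group inclusion to the conclusion on invariants.
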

\begin{proof}
  Let $X_{q,m}$ be the affine $\Lambda$-scheme parametrizing the pairs
  $(g,h)$, where $g$, $h$ are the invertible $m$ by $m$ matrices with
  $ghg^{-1} = h^q$. Let $R_{q,m}$ be the ring of functions of the
  connected compotent of $X_{q,m}$ containing the pair
  $({\rm Id}_n,{\rm Id}_n)$.  Note that the $\Lambda$-algebra
  $R_\nu^F$ is isomorphic to
  $\bigotimes_{[\overline{\tau}]}R_{q_{\overline{\tau}},
    n_{\overline{\tau}}}$, where $n_{\overline{\tau}}$ is the
  dimension of the space
  ${\rm Hom}_{I_F^{(l)}}(\overline{\tau}, \overline{\nu})$. The group
  ${\rm GL}_{n_{\overline{\tau}}}$ acts on
  $R_{q_{\overline{\tau}}, n_{\overline{\tau}}}$ by change of
  frame. Let $S_E=\{\tau_1,\tau_2,\dots, \tau_k\}$ (resp.
  $S_F\subset S_E$) be a set of representatives for the equivalence
  classes of $I_F^{(l)}$ representations for the action of $W_E$
  (resp. $W_F$). The group
$$
\prod_{\overline{\tau}\in S_E}{\rm GL}_{n_{\overline{\tau}}}
$$
denoted by $\mathcal{G}_{\nu}^E$, acts on $R^E_\nu$ via its action on
$R_{q_{\overline{\tau}}, n_{\overline{\tau}}}$. Similarly the group
$\mathcal{G}_\nu^F$, isomorphic to
$\prod_{\overline{\tau}\in S_F}{\rm GL}_{n_{\overline{\tau}}}$ is
contained in $\mathcal{G}_\nu^E$. Thus, the image of the restriction
of $B_{E/F}$ to $(R_\nu^E)^{\rm inv}$ is contained in the space of
invariance $(R_\nu^F)^{\rm inv}$.
\end{proof}

\subsubsection{}\label{base_change_map}
Let $e_{\mathfrak{s}}$ be a primitive idempotent of $\mathcal{Z}_n^F$
corresponding to the inertial equivalence class $\mathfrak{s}$. Fix a
pair $(L,\sigma)$ in the class $\mathfrak{s}$, where $L$ is a Levi
subgroup of $G_n(F)$ and $\sigma$ is an irreducible supercuspidal
$\mathbb{F}$-representation of $L$. Let $\rho_F$ be the
$n$-dimensional semisimple representation of $W_F$ associated with
$\sigma$ via the mod-$l$ semisimple local Langlands correspondence
(\cite[Theorem 1.6]{Vigneras_semisimple_mod-l}). Let
$\nu:I_F^{(l)}\rightarrow {\rm GL}_n(\Lambda)$ be the $l$-inertial
type such that its mod-$l$ reduction is isomorphic to
${\rm res}_{I_F^{(l)}}(\rho_F)$. Note that the restriction
${\rm res}_{W_E}(\rho_F)$ via mod-$l$ local Langlands correspondence
defines a pair $(L',\sigma')$ such that its inertial equivalence
class, denoted by $\mathfrak{r}$, is independent of the choice of
$(L,\sigma)$. Let $e_{\mathfrak{r}}$ be the primitive idempotent of
$\mathcal{Z}_n^E$ associated with $\mathfrak{r}$. The map $B_{E/F}$
induces a map
$z_{E/F}:e_\mathfrak{r}\mathcal{Z}_n^E\rightarrow
e_\mathfrak{s}\mathcal{Z}_n^F$ such that the following diagram
commutes.
$$\begin{tikzpicture}[scale=2]
  \node(A) at (0,1){$e_\mathfrak{r}\mathcal{Z}_n^E$};
  \node(B) at (1, 1){$R^E_\nu$};
  \node(C) at (0, 0){$e_\mathfrak{s}\mathcal{Z}_n^F$};
  \node(D) at (1, 0){$R^F_\nu$};
  \path[->, font=\scriptsize]
  (A) edge node[above]{$\mathbb{L}_\nu^E$} (B)
(A) edge node[left]{$z_{E/F}$} (C)
(B) edge node[right]{$B_{E/F}$} (D)
(C) edge node[above]{$\mathbb{L}_\nu^F$} (D);
\end{tikzpicture}
$$
Let $\rho_{x_F}:W_F\rightarrow {\rm GL}_n(\overline{\mathcal{K}})$ be the
representation corresponding to
$x_F: R_\nu^F\rightarrow \overline{\mathcal{K}}$ and let $\pi_{x_F}$
be a smooth irreducible representation of ${\rm GL}_n(F)$ associated
to $\rho_{x_F}$ via, the local Langlands correspondence. Let $x_E$ be
the map $x_F\circ B_{E/F}: R_\nu^E\rightarrow \overline{\mathcal{K}}$,
and the representation $\pi_{x_E}$ is the base change lifting of
$\pi_{x_F}$. The action of $\mathcal{Z}_n^E$ on the representation
$\pi_{x_E}$ factorises through $e_\mathfrak{r}\mathcal{Z}_n^E$, and it
is given by the homomorphism
$x_{F}\circ \mathbb{L}_{\nu}^F\circ z_{E/F}$.

\section{Rankin--Selberg convolution in families}
In this section, we review the theory of Rankin-Selberg convolution over arbitrary
Noetherian $\Lambda$-algebras. We start by recalling the notion of
co-Whittaker modules. For a precise reference, see
\cite{Conv_thm_Helm_Moss}, \cite{Gamma_factors_pairs_Moss}.
\subsection{Co-Whittaker modules}
Fix a non-trivial additive character
$\psi_K:K\rightarrow \Lambda^\times$. For any Noetherian
$\Lambda$-algebra $R$, we denote by $\psi_{K,R}$ the composition
$$
K\xrightarrow{\psi_K} \Lambda^\times\rightarrow R^\times.
$$
Let $N_n(K)$ be the group of all unipotent upper triangular matrices
in $G_n(K)$. The character $\psi_{K,R}:K\rightarrow R^\times$ induces
a character of $N_n(K)$, defined as
$$
(x_{ij})_{i,j=1}^n \longmapsto \psi_{K,R}(x_{12}+x_{23}+\cdots x_{n-1,n}),
$$
which is denoted by $\Psi_{K,R}$. Let $(\pi,V)$ be a smooth
$R[G_n(K)]$-module. We denote by $V^{(n)}$ the space of
$\Psi_{K,R}$-coinvariants, which is defined as the quotient of $V$
modulo the $R$-submodule generated by the set
$\Big\{\pi(x)v-(\Psi_{K,R})(x)v:x\in N_n(K), v\in V\Big\}$. The
$R$-module $V^{(n)}$ is the $n$-th Bernstein-Zelevinski derivative of
$V$.

\subsubsection{}
A smooth admissible $R[G_n(K)]$-module $(\pi,V)$ is called {\it
  co-Whittaker} if 
  \begin{enumerate}
      \item the $n$-th derivative $V^{(n)}$ is a free $R$-module
of rank one,
\item for any quotient $W$ of $V$ with $W^{(n)}=0$, we have $W=0$.
  \end{enumerate}
  A co-Whittaker module $(\pi, V)$ admits a central character denoted
  by $\varpi_\pi:F^\times\rightarrow R^\times$. By definition, the
  representation $(\pi,V)$ admits an $R$-module isomorphism
  $V^{(n)}\simeq R$. By Frobenius reciprocity, this isomorphism
  induces a $G_n(K)$-equivariant homomorphism
$$ V \longrightarrow {\rm Ind}_{N_n(K)}^{G_n(K)}(\Psi_{K,R}). $$
The image of $V$ under the above map is called the {\it Whittaker}
space of $\pi$. It is denoted by $\mathbb{W}(\pi,\psi_{K,R})$ and is
independent of the choice of isomorphism $V^{(n)} \simeq R$. The
identification $V^{(n)}\simeq R$ , also induces a $P_n(K)$-equivariant
map
$$ V \longrightarrow {\rm Ind}_{N_n(K)}^{P_n(K)}(\Psi_{K,R}),
$$ 
the image of the above map is called {\it Kirillov} space of $\pi$,
and it is denoted by $\mathbb{K}(\pi,\psi_{K,R})$. The map
$W\mapsto {\rm res}_{P_n(K)}(W)$ gives an isomorphism
$\mathbb{W}(\pi,\psi_{K,R})\xrightarrow{\sim}\mathbb{K}(\pi,\psi_{K,R})$
(\cite[Section 4]{Moss_Matringe_Kirillov_families}). Moreover, the
space $\mathbb{K}(\pi,\psi_{K,R})$ contains
${\rm ind}_{N_n(K)}^{P_n(K)}(\Psi_{K,R})$ as $R[P_n(K)]$-submodule.
\subsubsection{}
For a co-Whittaker $R[G_n(K)]$-module $(\pi,V)$, the endomorphism ring
${\rm End}_{R[G_n(K)]}(V)$ is equal to $R$, and the action of the
integral Bernstein center $\mathcal{Z}_n^K$ on $V$ induces a map
$f_\pi:\mathcal{Z}_n^K\rightarrow R$. The map $f_\pi$ is called the
{\it supercuspidal support} of $\pi$. The supercuspidal supports of
two co-Whittaker $R[G_n(K)]$-modules are the same if and only if they
have the same Whittaker spaces (see \cite[Lemma
2.4]{Converse_thm_Moss_Liu}). Moreover, if we have two co-Whittaker
$R[G_n(K)]$-modules $(\pi_1,V_1)$ and $(\pi_2,V_2)$ with a
$G_n(K)$-equivariant surjection $V_1\rightarrow V_2$, then
$f_{\pi_1} = f_{\pi_2}$.
\subsubsection{}
Let $W_n^K$ be the smooth $\Lambda[G_n(K)]$-module
${\rm ind}_{N_n(K)}^{G_n(K)}(\Psi_K^{-1})$. For a primitive idempotent
$e$ in the integral Bernstein center $\mathcal{Z}_n^K$, the space
$eW_n^K$ is a co-Whittaker $e\mathcal{Z}_n^K[G_n(K)]$-module.  The
following result is due to \cite[Theorem 6.3]{Whittaker_model_Helm},
which specify the universal property of $eW_n^K$.
\begin{theorem}
  Let $R$ be a Noetherian $\Lambda$-algebra, equipped with a
  $e\mathcal{Z}_n^K$-algebra structure. Then
  $eW_n^K\otimes_{e\mathcal{Z}_n^K} R$ is a co-Whittaker
  $R[G_n(K)]$-module. Conversely, if $(\pi,V)$ is a co-Whittaker
  $R[G_n(K)]$-module in the category $\mathcal{M}_e(G_n(K))$, then
  there exists a surjection
  $\beta:eW_n^K\otimes_{e\mathcal{Z}_n^K} R\rightarrow V$ such that
  the induced map
  $\beta^{(n)}:(eW_n^K\otimes_{e\mathcal{Z}_n^K} R)^{(n)}\rightarrow
  V^{(n)}$ is an isomorphism.
\end{theorem}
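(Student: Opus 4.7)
My plan is to reduce both statements to the corresponding properties of the universal module $eW_n^K$ over its base ring $e\mathcal{Z}_n^K$, using that the $n$-th derivative functor $(-)^{(n)}$ is right exact and commutes with the relevant tensor products. Concretely, $V^{(n)} = V \otimes_{\Lambda[N_n(K)]} \Lambda_{\Psi_K}$ is a right-exact functor commuting with arbitrary colimits in $V$; hence for any $e\mathcal{Z}_n^K$-algebra $R$,
\[
(eW_n^K \otimes_{e\mathcal{Z}_n^K} R)^{(n)} \;\simeq\; (eW_n^K)^{(n)} \otimes_{e\mathcal{Z}_n^K} R.
\]
The input that $eW_n^K$ is itself a co-Whittaker $e\mathcal{Z}_n^K[G_n(K)]$-module, with $(eW_n^K)^{(n)} \simeq e\mathcal{Z}_n^K$ free of rank one, then yields the first axiom: $(eW_n^K \otimes_{e\mathcal{Z}_n^K} R)^{(n)} \simeq R$.

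For the second co-Whittaker axiom of the base-changed module, suppose $q : eW_n^K \otimes_{e\mathcal{Z}_n^K} R \twoheadrightarrow W$ is an $R[G_n(K)]$-surjection with $W^{(n)} = 0$. Composing with the natural map $eW_n^K \to eW_n^K \otimes_{e\mathcal{Z}_n^K} R$ gives a $G_n(K)$-equivariant map $eW_n^K \to W$ whose image $W_0$ is an $e\mathcal{Z}_n^K[G_n(K)]$-submodule with $R \cdot W_0 = W$. Using the projectivity (or at least $e\mathcal{Z}_n^K$-flatness) of $eW_n^K$ together with a careful interchange of base change and the derivative functor, one shows $W_0^{(n)} = 0$, whence the second axiom applied to the co-Whittaker module $eW_n^K$ forces $W_0 = 0$, and therefore $W = R \cdot W_0 = 0$.

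For the converse, let $(\pi,V)$ be co-Whittaker over $R$ with supercuspidal support map $f_\pi : e\mathcal{Z}_n^K \to R$. The chosen isomorphism $V^{(n)} \simeq R$ produces, via the adjunction
\[
\operatorname{Hom}_{G_n(K)}(V, \operatorname{Ind}_{N_n(K)}^{G_n(K)}(\Psi_{K,R})) \;\simeq\; \operatorname{Hom}_R(V^{(n)}, R),
\]
the Whittaker map $V \to \operatorname{Ind}_{N_n(K)}^{G_n(K)}(\Psi_{K,R})$. Performing the analogous construction for $V' := eW_n^K \otimes_{e\mathcal{Z}_n^K} R$ (using the forward direction) gives a Whittaker map from $V'$, and since $V$ and $V'$ share the supercuspidal support $f_\pi$, they have the same Whittaker space inside $\operatorname{Ind}_{N_n(K)}^{G_n(K)}(\Psi_{K,R})$ by the characterization recalled before the theorem (see \cite[Lemma 2.4]{Converse_thm_Moss_Liu}). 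I would then construct $\beta$ using the projective generator property of $eW_n^K$ in $\mathcal{M}_e(G_n(K))$: this provides a lift of a chosen generator of $V^{(n)}$ to a $G_n(K)$-equivariant morphism $\beta : V' \to V$. By construction $\beta^{(n)} : R \to R$ carries a generator to a generator and is therefore an isomorphism; applying the second co-Whittaker axiom for $V$ to $\operatorname{coker}(\beta)$, which has vanishing $n$-th derivative by right-exactness, forces $\beta$ to be surjective.

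The main obstacle is the construction of $\beta$ in the converse direction. The naive Frobenius reciprocity identifies $\operatorname{Hom}_{G_n(K)}(eW_n^K, V)$ with the space of $v \in V$ satisfying $\pi(n) v = \Psi_K^{-1}(n) v$ for all $n \in N_n(K)$, i.e., with $(N_n(K), \Psi_K^{-1})$-invariants, whereas the datum $V^{(n)} \simeq R$ is a coinvariant datum and does not directly produce such invariants. Bridging this gap is precisely the technical heart of Helm's \cite[Theorem 6.3]{Whittaker_model_Helm}, and ultimately relies on establishing that $eW_n^K$ is a projective object in $\mathcal{M}_e(G_n(K))$, which is what finally delivers the required lifting and also controls the flatness input needed in the proof of the second axiom above.
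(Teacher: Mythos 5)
The paper does not prove this statement; it is quoted verbatim as \cite[Theorem 6.3]{Whittaker_model_Helm}, so there is no internal proof to compare against. Evaluating your sketch on its own merits, the forward direction is essentially right in outline, but the step you flag as ``careful interchange of base change and the derivative functor'' is not the right mechanism. Once you have the image $W_0$ of $eW_n^K$ sitting inside $W$, the conclusion $W_0^{(n)}=0$ follows immediately from the \emph{exactness} (not merely right exactness) of the $n$-th Bernstein--Zelevinsky derivative: $W_0\hookrightarrow W$ gives $W_0^{(n)}\hookrightarrow W^{(n)}=0$. Flatness or projectivity of $eW_n^K$ over $e\mathcal{Z}_n^K$ plays no role here, and invoking it muddies a point that is actually clean; the essential input is the Bernstein--Zelevinsky exactness theorem.

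The converse direction is where the real gap lies, and you are candid about this, but two things deserve sharper scrutiny. First, citing \cite[Lemma 2.4]{Converse_thm_Moss_Liu} to say that $V$ and $V'$ have the same Whittaker space risks circularity: that lemma is a downstream consequence of Helm's universality theorem, not an independent input to it, so it cannot be used to build the surjection $\beta$. Second, knowing the two Whittaker spaces coincide inside $\operatorname{Ind}_{N_n(K)}^{G_n(K)}(\Psi_{K,R})$ does not by itself produce a $G_n(K)$-map $V'\to V$, since the Whittaker maps from $V$ and $V'$ are (in general non-injective) surjections onto that common image, and one cannot lift backwards through a surjection without some projectivity. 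The projectivity that Helm actually exploits is not that of $eW_n^K$ in $\mathcal{M}_e(G_n(K))$, but the Bernstein--Zelevinsky projectivity of $\operatorname{ind}_{N_n(K)}^{P_n(K)}(\Psi_K)$ as a $\Lambda[P_n(K)]$-module, combined with the $P_n$-equivariant Kirillov model and a glueing argument to extend to all of $G_n(K)$. As written, your sketch does not close this gap; it correctly identifies where the difficulty sits but replaces Helm's mirabolic-group argument with an appeal to a projectivity statement that is at best unproved and at worst not the statement that is actually needed.
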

We end this subsection with a version of key vanishing result, usually
known as completeness of Whittaker models, provided by G.Moss
(\cite[Corollary 4.3]{Moss_nilpotent_gammafactors}). This vanishing
result will be crucially used in proving the main results of this
article.
\begin{theorem}\label{van_result}
  Let $R$ be a $\Lambda$-algebra, and let $\phi$ be an element of
  ${\rm ind}_{N_n(K)}^{G_n(K)}(\Psi_{K,R})$. If
$$
\int_{N_n(K)\setminus G_n(K)} \phi(g)\otimes W(g)\,dg = 0,
$$
for all $W\in\mathbb{W}(eW_n^K,\psi_K^{-1})$ and for all primitive
idempotents $e$ of $\mathcal{Z}_n^K$, then $\phi = 0$.
\end{theorem}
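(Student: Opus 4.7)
The plan is to combine the Bernstein-center decomposition of the compactly induced module with the Kirillov-model theorem for co-Whittaker modules recalled just above, and then use a translation argument to propagate a vanishing statement on the mirabolic $P_n(K)$ to all of $G_n(K)$.

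First, I would decompose $\phi$ according to the primitive idempotents of the Bernstein center. The compactly induced module inherits a Bernstein-type decomposition $\bigoplus_e e \cdot {\rm ind}_{N_n(K)}^{G_n(K)}(\Psi_{K,R})$, yielding a finite expression $\phi = \sum_e \phi_e$. The Rankin--Selberg pairing $\int \phi(g) W(g)\,dg$ is $\mathcal{Z}_n^K$-equivariant in the sense that $\int (e\phi)(g) W(g)\,dg = \int \phi(g) (eW)(g)\,dg$, and for $W \in \mathbb{W}(eW_n^K,\psi_K^{-1})$ every primitive idempotent $e' \neq e$ annihilates $W$. Consequently the hypothesis splits into the independent statements
\[
\int_{N_n(K) \setminus G_n(K)} \phi_e(g)\, W(g)\, dg = 0, \qquad W \in \mathbb{W}(eW_n^K,\psi_K^{-1}),
\]
one for each primitive idempotent. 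It therefore suffices to fix a primitive idempotent $e$ and prove $\phi_e = 0$; rename $\phi_e$ to $\phi$.

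Next, a translation argument reduces the problem to the mirabolic. Since $\mathbb{W}(eW_n^K,\psi_K^{-1})$ is stable under right translation by $G_n(K)$, a change of variables in the hypothesis gives (up to a modulus factor) $\int_{N_n(K) \setminus G_n(K)} (\rho(g_0^{-1})\phi)(g)\, W(g)\,dg = 0$ for every $g_0 \in G_n(K)$ and every $W$. Because $G_n(K) = \bigcup_{g_0} P_n(K)\, g_0$, proving that the hypothesis forces $\phi|_{P_n(K)} = 0$ is enough: the same argument applied to each translate $\rho(g_0^{-1})\phi$ then yields $\phi(p g_0) = 0$ for all $p \in P_n(K)$ and $g_0 \in G_n(K)$, hence $\phi \equiv 0$. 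To establish $\phi|_{P_n(K)} = 0$, I would appeal to the Kirillov-model theorem: for any $f \in {\rm ind}_{N_n(K)}^{P_n(K)}(\Psi_K^{-1})$ there is a Whittaker function $W_f \in \mathbb{W}(eW_n^K,\psi_K^{-1})$ with $W_f|_{P_n(K)} = f$. Pairing $\phi$ with such $W_f$ and using admissibility, the compact support of $\phi$ modulo $N_n(K)$, and the Iwasawa-type structure of $G_n(K)/P_n(K)$, one shows that the integral reduces to a non-degenerate pairing between $\phi|_{P_n(K)}$ and $f$; varying $f$ then forces $\phi|_{P_n(K)} = 0$.

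The main obstacle lies in this last step: the Kirillov-model theorem permits us to prescribe the Whittaker function $W$ only on the mirabolic, so one must control the contribution of the integral coming from $G_n(K) \setminus P_n(K)$ via admissibility and the compact support of $\phi$. This is precisely the technical content supplied by Moss in \cite[Corollary 4.3]{Moss_nilpotent_gammafactors}, where a careful analysis of the Iwasawa decomposition and of the central character of the co-Whittaker module $eW_n^K$ does the real work.
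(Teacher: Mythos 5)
The paper does not actually prove this statement: it is recalled verbatim as a theorem of Moss, with a citation to \cite[Corollary 4.3]{Moss_nilpotent_gammafactors}, and the surrounding text (``We end this subsection with a version of key vanishing result\dots provided by G.\,Moss'') makes clear the authors are importing it as an input. So there is no argument in the paper against which to compare your proposal.

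Turning to your sketch on its own terms: the Bernstein-idempotent decomposition and the right-translation reduction to the claim $\phi|_{P_n(K)}=0$ are sensible preliminary moves, but the decisive step is missing. The Kirillov-model theorem lets you prescribe the restriction $W_f|_{P_n(K)}=f$, yet says nothing about the values of $W_f$ on $G_n(K)\setminus P_n(K)$, and the hypothesis is a vanishing of $\int_{N_n(K)\setminus G_n(K)}\phi\,W_f\,dg$, an integral that genuinely sees $W_f$ off the mirabolic. Neither admissibility nor compact support of $\phi$ modulo $N_n(K)$ gives control of $W_f$ there: after fixing $W_f|_{P_n(K)}$, the function can still behave essentially arbitrarily on the rest of the support of $\phi$, so nothing in what you wrote ``reduces the integral to a non-degenerate pairing between $\phi|_{P_n(K)}$ and $f$.'' Your closing remark that this is ``precisely the technical content supplied by Moss in Corollary 4.3'' is circular, because that corollary \emph{is} the statement you set out to prove. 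As it stands, the proposal is not a proof but an outline whose decisive step is deferred to the very citation the paper itself uses, and the scaffolding you add does not actually localize the difficulty to the mirabolic in the way suggested---the completeness over all of $G_n(K)$, not merely over $P_n(K)$, is the nontrivial content that Moss's argument must (and does) supply.
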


\subsection{Rankin-Selberg formal series}
In this subsection, we recall the Rankin-Selberg gamma factors over
families, which provide the classical Rankin-Selberg gamma factors as
a specialization on $\overline{\mathcal{K}}$-points. For a reference,
see \cite[Section 3]{Gamma_factors_pairs_Moss}.

\subsubsection{}
We now introduce some notations. Let $w_n$ be the
matrix of $G_n(K)$ of the form:
$$ 
w_n =
\begin{pmatrix}
        0  &  & &  1  \\
        &  & . \\
        & .   \\
        1 &  &  & 0
\end{pmatrix}
$$
For $r\in\mathbb{Z}$, set
$G_{n-1}(K)_r=\big\{g\in G_{n-1}(K) : v_K({\rm det}(g))=r\big\}$,
where $v_K$ denote the normalised discrete valuation of $K$. Let $X_K$
be the coset space $N_{n-1}(K)\setminus G_{n-1}(K)$. For an integer
$r$, we denote by $X_r^K$ the set of the form
$\big\{N_{n-1}(K)g:g\in G_{n-1}(K)_r\big\}$.

\subsubsection{}
Let $A$ and $B$ be two Noetherian $\Lambda$-algebras. Let $\pi$
(resp. $\pi'$) be the co-Whittaker $A[G_n(K)]$
(resp. $B[G_{n-1}(K)]$)-module. For any
$W\in\mathbb{W}(\pi,\psi_{K,A})$ and
$W'\in\mathbb{W}(\pi',\psi_{K,B}^{-1})$, the integral
$$
c_r^K(W,W'):=\int_{X_r^K}
W\begin{pmatrix}
    g & 0 \\
    0 & 1
\end{pmatrix}\otimes W'(g)\,dg
$$
is well-defined for all integers $r$ and it is zero for $r<<0$ (see
\cite[Section 3]{Gamma_factors_pairs_Moss}). The formal Laurent series
$$ 
\sum_{r\in\mathbb{Z}}c_r^K(W,W')X^r, 
$$
is an element of $S^{-1}((A\otimes_{\Lambda} B)[X,X^{-1}])$, where $S$
is the multiplicative set consisting of polynomials
$\sum_{r=t}^sa_rX^r$ with $a_t$ and $a_s$ being units in
$A\otimes_{\Lambda} B$. Now, let us consider the functions
$\widetilde{W}$ and $\widetilde{W'}$, defined as
$$
\widetilde{W}(g) : = W(w_n(g^t)^{-1})
$$ and
$$
\widetilde{W'}(g) : = W'(w_{n-1}(x^t)^{-1}),
$$
for all $g\in G_n(K)$ and $x\in G_{n-1}(K)$. Making change of
variables, we have the identity:
\begin{equation}\label{formulation_zeta}
c_r^K(\widetilde{W},\widetilde{W'}) = 
c_{-r}^K\big(\pi(w_n)W,\pi'(w_{n-1})W'\big).
\end{equation}
\subsubsection{Functional Equation}
Given $\pi$ and $\pi'$ as above, there is a unique element
$\gamma(X,\pi,\pi',\psi_K)$ in the ring of fraction
$S^{-1} ((A\otimes_{\Lambda} B)[X,X^{-1}])$ such that
$$
\sum_{r\in\mathbb{Z}}c_r^K(\widetilde{W},
\widetilde{W'})X^{-r}=
\varpi_{\pi'}(-1)^{n-1}\gamma(X,\pi,\pi',\psi_K)
\sum_{r\in\mathbb{Z}}c_r^K(W,W')X^r,
$$
for all $W\in\mathbb{W}(\pi,\psi_{K,A})$ and
$W'\in\mathbb{W}(\pi',\psi_{K,B}^{-1})$.

Let $e$ (resp. $e'$) be the primitive idempotent in $\mathcal{Z}_n^K$
(resp. $\mathcal{Z}_{n-1}^K$) such that the supercuspidal support map
$f_\pi$ (resp. $f_{\pi'}$) factors through the center
$e\mathcal{Z}_n^K$ (resp. $e'\mathcal{Z}_{n-1}^K$). The gamma factor
$\gamma(X,eW_n^K,e'W_{n-1}^K,\psi_K)$ corresponding to the pair
$(eW_n^K, e'W_{n-1}^K)$ admits the following universal property
(\cite[Theorem 5.4]{local_constant_Moss}) in the families of
co-Whittaker modules.
\begin{theorem}\label{univ_gamma_factors}
Let $A$ and $B$ be two Noetherian $\Lambda$-algebras. Let $\pi$ and
$\pi'$ be two co-Whittaker $A[G_n(K)]$ and $B[G_{n-1}(K)]$-modules
respectively. Let $e$ (resp. $e'$) be the primitive idempotent
of $\mathcal{Z}_n^K$ (resp. $\mathcal{Z}_{n-1}^K$) such that supercuspidal support $f_{\pi}$ (resp. $f_{\pi'}$) factors through $e\mathcal{Z}_n^K$ (resp. $e'\mathcal{Z}_{n-1}^K$). Then
$$
\gamma(X,\pi,\pi',\psi_K) = 
(f_{\pi}\otimes f_{\pi'})\big(\gamma(X,eW_n^K,e'W_{n-1}^K,\psi_K)\big).
$$
\end{theorem}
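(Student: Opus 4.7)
The plan is to reduce to the ``universal'' pair $(eW_n^K,e'W_{n-1}^K)$ via the universal surjection supplied by the preceding theorem, and then exploit the bilinearity of the zeta integrals to transport the functional equation along the ring map $f_\pi \otimes f_{\pi'}$. First, I would apply the universal property to obtain a surjection $\beta : \widetilde{\pi} \twoheadrightarrow V$ with $\beta^{(n)}$ an isomorphism, where $\widetilde{\pi} := eW_n^K \otimes_{e\mathcal{Z}_n^K} A$ (with $A$ regarded as an $e\mathcal{Z}_n^K$-algebra via $f_\pi$), and analogously a surjection $\beta' : \widetilde{\pi}' \twoheadrightarrow V'$ with $\widetilde{\pi}' := e'W_{n-1}^K \otimes_{e'\mathcal{Z}_{n-1}^K} B$. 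Since $\widetilde{\pi}$ and $\pi$ share the supercuspidal support $f_\pi$, the fact recalled in the paper (two co-Whittaker modules with identical supercuspidal supports have identical Whittaker spaces) yields $\mathbb{W}(\pi,\psi_{K,A}) = \mathbb{W}(\widetilde{\pi},\psi_{K,A})$, and similarly on the $G_{n-1}(K)$-side. Consequently the zeta integrals, and hence $\gamma(X,\pi,\pi',\psi_K)$ itself, coincide with the corresponding objects for $(\widetilde{\pi},\widetilde{\pi}')$, and it suffices to establish the theorem for that pair.

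Second, I would verify that the zeta integrals commute with extension of scalars along $f_\pi \otimes f_{\pi'}$. Any $W \in \mathbb{W}(\widetilde{\pi},\psi_{K,A})$ is a finite $A$-linear combination of functions of the form $W_0 \otimes_{f_\pi} 1$ with $W_0 \in \mathbb{W}(eW_n^K,\psi_K)$, and analogously on the other side. The pointwise description of the tensor product gives
\[
c_r^K(W,W') \;=\; (f_\pi \otimes f_{\pi'})\bigl(c_r^K(W_0,W_0')\bigr),
\]
extended $A$- and $B$-bilinearly, and the same identity holds for $c_r^K(\widetilde{W},\widetilde{W'})$ via (\ref{formulation_zeta}). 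Combined with the compatibility $\varpi_{\widetilde{\pi}'} = f_{\pi'} \circ \varpi_{e'W_{n-1}^K}$ of central characters, applying $f_\pi \otimes f_{\pi'}$ term by term to the universal functional equation defining $\gamma(X,eW_n^K,e'W_{n-1}^K,\psi_K)$ produces a valid functional equation for $(\widetilde{\pi},\widetilde{\pi}')$ in which $\gamma$ has been replaced by $(f_\pi \otimes f_{\pi'})\bigl(\gamma(X,eW_n^K,e'W_{n-1}^K,\psi_K)\bigr)$. The uniqueness clause in the definition of $\gamma(X,\widetilde{\pi},\widetilde{\pi}',\psi_K)$ then forces the claimed identity.

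The main technical hurdle is bookkeeping in the rings of fractions: one must check that $f_\pi \otimes f_{\pi'}$ extends to a map between the relevant localisations, i.e., that the multiplicative set of polynomials in $(e\mathcal{Z}_n^K \otimes_\Lambda e'\mathcal{Z}_{n-1}^K)[X,X^{-1}]$ with unit leading and trailing coefficients is sent into its analogue in $(A \otimes_\Lambda B)[X,X^{-1}]$. This is immediate from the fact that ring homomorphisms preserve units, but it is the only point where genuine verification beyond the formal manipulation of the functional equation is required; a secondary observation, that the $A$-linear span of base-changed Whittaker functions exhausts $\mathbb{W}(\widetilde{\pi},\psi_{K,A})$, follows directly from the construction of $\widetilde{\pi}$ as a tensor product over $e\mathcal{Z}_n^K$.
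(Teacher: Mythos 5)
The paper does not actually prove Theorem~\ref{univ_gamma_factors}: it is stated as a citation to \cite[Theorem~5.4]{local_constant_Moss}, so there is no in-text argument to compare against. Your reconstruction is nonetheless correct and is essentially the expected proof. The reduction from $(\pi,\pi')$ to $(\widetilde{\pi},\widetilde{\pi}') = (eW_n^K\otimes_{f_\pi}A, e'W_{n-1}^K\otimes_{f_{\pi'}}B)$ via equality of Whittaker spaces (same supercuspidal support implies same Whittaker space, hence the same zeta integrals and central character, hence the same $\gamma$) is valid; the pointwise base-change identity $c_r^K(f_\pi\circ W_0, f_{\pi'}\circ W_0') = (f_\pi\otimes f_{\pi'})\bigl(c_r^K(W_0,W_0')\bigr)$ is correct since each $c_r^K$ is a finite $A\otimes_\Lambda B$-valued sum and $\mathbb{W}(\widetilde{\pi},\psi_{K,A})$ is the $A$-span of the functions $f_\pi\circ W_0$; and your observation that $f_\pi\otimes f_{\pi'}$ carries the multiplicative set $S$ into its counterpart (ring maps send units to units, so leading/trailing coefficients remain units) disposes of the only delicate point about the ring of fractions. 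Applying $f_\pi\otimes f_{\pi'}$ to the universal functional equation and invoking the uniqueness built into the definition of $\gamma$ then gives the claim. The only thing I would add for completeness is that your appeal to the universal surjection $\beta$ is slightly more than needed: $\widetilde{\pi}$ has supercuspidal support $f_\pi$ directly by construction of the tensor product, so the equality of Whittaker spaces with $\pi$ is immediate without first producing $\beta$.
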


\subsection{Frobenius twist} 
Let $R$ be a Noetherian $\mathbb{F}$-algebra. Let $(\pi,V)$ be a co-Whittaker $R[G_n(K)]$-module. Let ${\rm Fr}:R\rightarrow R$ be the map $x\mapsto x^l$. We denote by  $V^{(l)}$ the $R$
module $V\otimes_{{\rm Fr}}R$. The module $(\pi^{(l)}, V^{(l)})$ is called the
Frobenius twist of $(\pi,V)$.  Let $\mathfrak{s}$ be the inertial equivalence class such that $\pi \in \mathcal{M}_{\mathfrak{s}}(G_n(K))$, and let $e$ be the primitive idempotent of $\mathcal{Z}_n^K$, associated with $\mathfrak{s}$. The composition
$$
e\mathcal{Z}_n^K\xrightarrow {f_{\pi}} R \xrightarrow{\lambda\mapsto
  \lambda ^l} R,
$$
denoted by $f_{\pi^{(l)}}$, is the supercuspidal support of the
Frobenius twist $\pi^{(l)}$. Let $R'$ be a Noetherian
$\mathbb{F}$-algebra, and let $(\pi',V')$ be the co-Whittaker
$R'[G_{n-1}(K)]$-module. Let $\mathfrak{s}'$ be the inertial equivalence class such that
$\pi'\in \mathcal{M}_{\mathfrak{s}'}(G_{n-1}(K))$, and let $e'$ be the primitive idempotent in
$\mathcal{Z}_{n-1}^K$, associated with $\mathfrak{s'}$. Using Theorem
\ref{univ_gamma_factors} and the properties of supercuspidal supports
$f_{\pi^{(l)}}$ and $f_{\pi'^{(l)}}$, we get the following identity of
gamma factors
\begin{equation}\label{mod_l_gamma_identity}
\gamma(X,\pi,\pi',\psi_K)^l =  \gamma(X^l,\pi^{(l)},\pi'^{(l)},\psi_K^l).
\end{equation}

In the next subsection, we set up an idenitity of the integrals on the
homeogeneous space of $F$ with those on the homogeneous space of
$E$. This is crucial for the main results.
\subsection{Integrals on homogeneous space}
As before, let $E$ be a finite Galois extension of a $p$-adic field
$F$ with degree of extension $l$, where $l$ is coprime to $p$. We
denote by $\Gamma$ the Galois group ${\rm Gal}(E/F)$. Let $R$ be an
$\mathbb{F}$-algebra with $p$ being invertible $R$. There is a natural
action of $\Gamma$ on the coset space $X_E$ and hence on the space
$C_c^\infty(X_E,R)$, consisting of smooth and compactly supported
$R$-valued functions on $X_E$, given by
$(\gamma.\varphi)(g):=\varphi(\gamma^{-1}g)$, for all
$\gamma\in\Gamma$, $f\in C_c^\infty(X_E,R)$ and for all $g\in X_E$. We
denote by $C_c^\infty(X_E,R)^\Gamma$ the space of $\Gamma$-fixed
elements in $C_c^\infty(X_E,R)$. With these, we now deduce:
\begin{proposition}
Let $d\mu_E$ and $d\mu_F$ be the Haar measures on $X_E$ and $X_F$
respectively. Then, there exists a non-zero scalar
$\alpha\in \mathbb{F}$ such that for all
$\varphi \in C_c^\infty(X_E,R)^\Gamma$, we have
$$ \int_{X_E}\varphi \,d\mu_E = \alpha \int_{X_F}\varphi \,d\mu_F. $$
\end{proposition}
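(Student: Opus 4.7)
The plan is to show that the linear functional $\int_{X_E}(\cdot)\, d\mu_E$ on $C_c^\infty(X_E,R)^\Gamma$ factors through the natural restriction map to $X_F$, and then to identify the resulting functional on $C_c^\infty(X_F,R)$ with a scalar multiple of $\int_{X_F}\, d\mu_F$ by uniqueness of invariant linear forms. The decisive input throughout is that $R$ is an $\mathbb{F}$-algebra, so $l = 0$ in $R$.

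First I would identify the $\Gamma$-fixed locus $X_E^\Gamma$ with the image of the natural closed embedding $X_F\hookrightarrow X_E$. If $N_{n-1}(E)g\in X_E^\Gamma$, then $\gamma\mapsto \gamma(g)g^{-1}$ is a $1$-cocycle with values in $N_{n-1}(E)$; since $N_{n-1}(E)$ admits a central filtration with graded pieces isomorphic to $(E,+)$ and $H^i(\Gamma,E) = 0$ for $i\ge 1$ by the normal basis theorem, we obtain $H^1(\Gamma,N_{n-1}(E)) = 0$, so the cocycle is a coboundary and $g$ lies in $N_{n-1}(E)G_{n-1}(F)$. Consequently $\Gamma$ acts freely on the open complement $U = X_E\setminus X_F$.

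Next comes the key vanishing step: if $\varphi\in C_c^\infty(X_E,R)^\Gamma$ vanishes on $X_F$, then $\int_{X_E}\varphi\, d\mu_E = 0$. By local constancy $\varphi$ vanishes on some $\Gamma$-stable open neighborhood of $X_F$, so its support is a compact $\Gamma$-invariant subset of $U$. Writing $\varphi = \sum_j c_j\mathbf{1}_{V_j}$ with $V_j\subset U$ disjoint $\Gamma$-stable clopen sets, each $V_j$ admits a clopen fundamental domain $W_j$ for the free $\Gamma$-action (the étale $\Gamma$-cover $V_j\to \Gamma\backslash V_j$ is locally trivial over a totally disconnected base, so a global section exists). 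By $\Gamma$-invariance of $\mu_E$,
$$
\mu_E(V_j) \;=\; \sum_{\gamma\in\Gamma}\mu_E(\gamma W_j) \;=\; l\cdot \mu_E(W_j) \;=\; 0 \quad \text{in } R,
$$
and hence $\int_{X_E}\varphi\, d\mu_E = 0$. Thus $\int_{X_E}$ factors through the restriction $\res : C_c^\infty(X_E,R)^\Gamma\to C_c^\infty(X_F,R)$; surjectivity of $\res$ follows from a $\Gamma$-equivariant clopen tubular neighborhood of $X_F$ in $X_E$, obtained from the set $N_{n-1}(E)G_{n-1}(F)K$ for a small $\Gamma$-stable compact open subgroup $K$ of $G_{n-1}(E)$, yielding a well-defined $R$-linear functional $L:C_c^\infty(X_F,R)\to R$ with $\int_{X_E}\varphi\, d\mu_E = L(\varphi|_{X_F})$.

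Since $\mu_E$ is $G_{n-1}(E)$-invariant and $G_{n-1}(F)$ commutes with $\Gamma$, the functional $L$ is $G_{n-1}(F)$-invariant on $C_c^\infty(X_F,R)\simeq \ind_{N_{n-1}(F)}^{G_{n-1}(F)}R$. By Frobenius reciprocity the space of such invariant functionals is free of rank one over $R$, spanned by $\int_{X_F}d\mu_F$, so $L = \alpha\int_{X_F}d\mu_F$ for a unique $\alpha\in R$. Nonvanishing of $\alpha$ is checked by evaluating on the characteristic function of a small $\Gamma$-stable congruence neighborhood: a direct index computation reduces the ratio of the two sides to powers of $q_E$ and $q_F$, and Fermat's identity $q_F^{l-1}\equiv 1\pmod l$ forces this ratio to a nonzero element of $\mathbb{F}^\times$ independent of the level. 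The main obstacle is the free-orbit vanishing argument above, which requires justifying that the free action of $\Gamma$ on the $p$-adic manifold $U$ admits a clopen fundamental domain; once this is in place the $\Gamma$-orbit contributions are $l$ times a partial integral and thus die in characteristic $l$, and the rest of the argument is essentially formal.
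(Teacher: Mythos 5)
Your overall strategy — identify $X_E^\Gamma$ with (the image of) $X_F$ via vanishing of $H^1(\Gamma,N_{n-1}(E))$, observe that the complement carries a free $\Gamma$-action whose orbits contribute $l\cdot(\text{something})=0$ in the $\mathbb{F}$-algebra $R$, and then pin down the resulting functional on $C_c^\infty(X_F,R)$ by uniqueness of the invariant measure — is correct and is the natural route; the paper itself only cites \cite[Proposition 5.2]{dhar2022tate}, and your argument is what that reference carries out. A few of your justifications should be tightened, though. First, the appeal to "Frobenius reciprocity" for the one-dimensionality of $G_{n-1}(F)$-invariant functionals on $C_c^\infty(X_F,R)\simeq\mathrm{ind}_{N_{n-1}(F)}^{G_{n-1}(F)}\mathbf{1}$ is not quite the right principle (compact-induction Frobenius reciprocity needs the subgroup to be open); what you actually use is the uniqueness, up to $R$-scalar, of the $G$-invariant measure on $N\backslash G$ when $p$ is invertible in $R$. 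Second, the phrase "$\varphi$ vanishes on some $\Gamma$-stable open neighborhood of $X_F$" is imprecise (the closed set $X_F$ is not compact); the correct and sufficient statement, which you do supply in the next sentence, is that the support of a locally constant $\varphi$ vanishing on $X_F$ is a compact clopen $\Gamma$-invariant subset disjoint from $X_F$. Third, the existence of a clopen fundamental domain for the free $\Gamma$-action on a compact clopen $V_j\subset U$ deserves a line: by freeness and Hausdorffness each point has a clopen neighborhood $W$ with $\gamma W\cap W=\emptyset$ for $\gamma\neq 1$, compactness gives a finite cover, and a disjointification produces the required $W_j$; the "global section of an étale cover" phrasing is slicker but hides exactly this. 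Finally, for the nonvanishing of $\alpha$: the real point is simply that evaluating both sides on the indicator of $N_{n-1}(E)K/N_{n-1}(E)$ for a small $\Gamma$-stable compact open $K$ (using $H^1(\Gamma,N_{n-1}(E)\cap K)=0$ to identify the restriction to $X_F$ with the indicator of $N_{n-1}(F)(K\cap G_{n-1}(F))/N_{n-1}(F)$) yields a ratio that is a power of $q_F$ times an index, all units in $\mathbb{F}^\times$ since $l\neq p$; Fermat's congruence $q_F^{l-1}\equiv 1\pmod l$ is not what is needed for nonvanishing, only invertibility of $q_F$. You should also observe that $\alpha$ indeed lies in $\mathbb{F}$ (not merely $R$), which follows by testing against $\mathbb{F}$-valued functions, consistent with the statement of the proposition.
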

\begin{proof}
  The proof is immediate by following the arguments of
  \cite[Proposition 5.2]{dhar2022tate} mutatis-mutandis.
\end{proof}
\begin{remark}\label{rmk_int2}
\rm The Haar measures $d\mu_E$ and $d\mu_F$ on $X_E$ and $X_F$,
respectively, are now choosen in a way that ensures $\alpha = 1$.
Moreover, if $e$ is the ramification index of the extension $E$ over
$F$, then for all $k\notin\{re : r\in \mathbb{Z}\}$, we have
$$ \int_{(X_E^k)^\Gamma}\varphi\,d\mu_F = 0, $$
and for all $k\in\{re : r\in \mathbb{Z}\}$, we have
$$ \int_{(X_E^k)^\Gamma}\varphi\,d\mu_F =
\int_{X_F^{\frac{r}{e}}}\varphi\,d\mu_F. $$
\end{remark}

\section{Tate cohomology of co-Whittaker modules}
In this section, we study the compatibility of Tate cohomology with
local base change for universal co-Whittaker modules.
\subsection{}
Let $E$ be a finite Galois extension of a $p$-adic field $F$ with
$[E:F]=l$, where $l$ and $p$ are distinct primes. We fix a generator
$\gamma$ of the cyclic group ${\rm Gal}(E/F)$. For any
$\Lambda$-algebra $R$ and any smooth $R[G_E]$-module $V$, we denote by
$V^\gamma$ the smooth $R[G_E]$-module, where the underlying set is $V$
but the $G_E$-action on $V^\gamma$ is twisted by $\gamma$. Note that
the functor $V\mapsto V^\gamma$ is exact and covariant.

\subsection{Galois invariance}
Let $e_{\mathfrak{s}}$ be a primitive idempotent of $\mathcal{Z}_n^F$,
and let $\nu$ be the $l$-inertial type corresponding to
$e_{\mathfrak{s}}$. Then there exists a primitive idempotent
$e_{\mathfrak{r}}$ of $\mathcal{Z}_n^E$, which corresponds to the same
$l$-inertial type $\nu$. We have the base change maps
$z_{E/F}:e_{\mathfrak{r}}\mathcal{Z}_n^E\rightarrow
e_{\mathfrak{s}}\mathcal{Z}_n^F$ and
$B_{E/F}:R_\nu^E\rightarrow R_\nu^F$, as defined in the subsection
(\ref{base_change_map}), with the following identity
\begin{equation}\label{commutative_1}
\mathbb{L}_\nu^F \circ z_{E/F} = B_{E/F} \circ \mathbb{L}_\nu^E.
\end{equation}
We denote by $A_E$ and $A_F$ the Noetherian $\Lambda$-algebras
$e_{\mathfrak{r}}\mathcal{Z}_n^E$ and
$e_{\mathfrak{s}}\mathcal{Z}_n^E$ respectively. Note that the ring
$A_F$ is considered as an $A_E$-module via the map $z_{E/F}$.

\begin{lemma}\label{inv_1}
Let $\mathcal{V}$ be the co-Whittaker $A_F[G_n(E)]$-module
$e_{\mathfrak{r}}W_n^E\otimes_{A_E} A_F$. Then we have
$$
\mathbb{W}(\mathcal{V},\psi_{E,A_F})
= \mathbb{W}(\mathcal{V}^\gamma,\psi_{E,A_F}).
$$
\end{lemma}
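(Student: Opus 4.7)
The plan is to reduce the equality of Whittaker spaces to an equality of supercuspidal supports, and from there to the $\Gamma$-invariance of the base change map $z_{E/F}$ on $A_E$. Since $V\mapsto V^\gamma$ is exact and $\psi_E=\psi_E^\gamma$ (because $\psi_E=\psi_F\circ\mathrm{Tr}_{E/F}$), the twisted module $\mathcal{V}^\gamma$ is again a co-Whittaker $A_F[G_n(E)]$-module. By Lemma 2.4 of \cite{Converse_thm_Moss_Liu} (recalled above), two co-Whittaker modules over the same ring have identical Whittaker spaces if and only if their supercuspidal support maps $\mathcal{Z}_n^E\to A_F$ coincide. Hence the lemma is equivalent to the identity $f_{\mathcal{V}}=f_{\mathcal{V}^\gamma}$.

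Next I would unwind both sides. By construction, $f_{\mathcal{V}}$ factors as the quotient $\mathcal{Z}_n^E\twoheadrightarrow A_E$ followed by $z_{E/F}$. The autoequivalence $V\mapsto V^\gamma$ of $\mathcal{M}_\Lambda(G_n(E))$ induces an automorphism $\gamma^*$ of the Bernstein center $\mathcal{Z}_n^E$, characterized by $(\gamma^*z)_V=z_{V^\gamma}$, and a direct naturality check then yields $f_{\mathcal{V}^\gamma}=f_{\mathcal{V}}\circ\gamma^*$. Since $\mathfrak{r}$ arises from a Langlands parameter that extends from $W_E$ to $W_F$, the inertial class $\mathfrak{r}$ is $\gamma$-stable; consequently $\gamma^*(e_{\mathfrak{r}})=e_{\mathfrak{r}}$ and $\gamma^*$ restricts to an automorphism of $A_E$. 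The lemma thus reduces to showing $z_{E/F}\circ\gamma^*|_{A_E}=z_{E/F}$ as ring maps $A_E\to A_F$.

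To prove this identity I would invoke the commutative square \eqref{commutative_1} together with the injectivity of $\mathbb{L}_\nu^F$, which identifies $A_F$ with $(R_\nu^F)^{\mathcal{G}_\nu^F}$. It then suffices to verify $B_{E/F}\circ\mathbb{L}_\nu^E\circ\gamma^*=B_{E/F}\circ\mathbb{L}_\nu^E$. Under $\mathbb{L}_\nu^E$, and by the Galois-equivariance of the mod-$l$ local Langlands correspondence at classical points, $\gamma^*$ corresponds to the automorphism of $R_\nu^E$ induced by conjugating the universal pseudoframed $W_E$-deformation by a chosen lift $\tilde\gamma\in W_F$ of $\gamma$. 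Composing with $B_{E/F}$, this conjugation acts on the universal $W_F$-deformation by an element of $W_F$ itself, which is realized as an inner automorphism of the underlying representation and hence amounts merely to a change of frame, i.e., an element of $\mathcal{G}_\nu^F$. Since the image of $B_{E/F}\circ\mathbb{L}_\nu^E$ on $A_E$ lands in $(R_\nu^F)^{\mathcal{G}_\nu^F}$, this $\mathcal{G}_\nu^F$-action is trivial, which gives the identity.

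The most delicate point is the third step: precisely matching $\gamma^*$ on the Bernstein-center side with $\tilde\gamma$-conjugation on the deformation-ring side, and carefully absorbing the resulting $\mathcal{G}_\nu$-gauge ambiguity. A cleaner alternative, which sidesteps the deformation-theoretic bookkeeping, is to argue by density of classical points: for every map $x_F\colon A_F\to\overline{\mathcal{K}}$ corresponding to a Langlands parameter $\rho_{x_F}$, the composition $x_F\circ z_{E/F}$ is the supercuspidal support of the base change lift $\pi_{x_E}$, which satisfies $\pi_{x_E}\cong\pi_{x_E}^\gamma$ by a defining property of base change; hence $x_F\circ z_{E/F}\circ\gamma^*=x_F\circ z_{E/F}$ for all such $x_F$, and density of these points in $\mathrm{Spec}(A_F)$ (after passing to the reduction if needed) promotes this pointwise invariance to the identity in $A_F$.
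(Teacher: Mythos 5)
Your second (``density of classical points'') argument is correct and, at bottom, coincides with the paper's own proof: both arguments specialize to $\overline{\mathcal{K}}$-points $x_F$ of $A_F$, invoke the isomorphism $\Pi_{x}\cong\Pi_{x}^\gamma$ coming from the defining property of base change, and then promote the pointwise statement to an identity over $A_F$ using that $A_F$ is reduced and $\Lambda$-flat. What differs is the reduction step wrapping this core argument. The paper invokes the local converse theorem in families (\cite[Theorem 1.1]{Converse_thm_Moss_Liu}), reducing the equality of Whittaker spaces to equality of the Rankin--Selberg $\gamma$-factors $\gamma(X,\mathcal{V},\tau,\psi_E)=\gamma(X,\mathcal{V}^\gamma,\tau,\psi_E)$ for all $\tau$, and then specializes these $\gamma$-factors at each $x_F$. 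You instead use the characterization of the Whittaker space by the supercuspidal support map (\cite[Lemma 2.4]{Converse_thm_Moss_Liu}), introduce the automorphism $\gamma^*$ of $\mathcal{Z}_n^E$ induced by $V\mapsto V^\gamma$, correctly verify $f_{\mathcal{V}^\gamma}=f_{\mathcal{V}}\circ\gamma^*$ and $\gamma^*(e_{\mathfrak{r}})=e_{\mathfrak{r}}$, and thereby reduce the lemma to the cleaner identity $z_{E/F}\circ\gamma^*|_{A_E}=z_{E/F}$; this is arguably more conceptual, as it names the precise obstruction ($\Gamma$-equivariance of the base change map on Bernstein centers) rather than passing through $\gamma$-factors. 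Your first suggested route---identifying $\gamma^*$ on the deformation-ring side with conjugation by a lift $\tilde\gamma\in W_F$ and absorbing the discrepancy into $\mathcal{G}_\nu^F$---is plausible but, as you acknowledge, would require a careful check of which lift to use and why the resulting change of frame lands in $\mathcal{G}_\nu^F$ after applying $B_{E/F}$; the density argument sidesteps this and is the right way to close the proof.
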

\begin{proof}
We use the local converse therorem (\cite[Theorem
1.1]{Converse_thm_Moss_Liu}) for co-Whittaker modules.
It is enough to show that 
$$
\gamma(X,\mathcal{V},\tau,\psi_E) = 
\gamma(X,\mathcal{V}^\gamma,\tau,\psi_E),
$$
for $\tau$ varying over all irreducible generic integral
$\overline{\mathcal{K}}$-representation of $G_t(E)$, for
$1\leq t \leq [\frac{n}{2}]$.  For a specialization
$x:A_F\rightarrow \overline{\mathcal{K}}$, the $G_n(E)$ representation
$(e_\mathfrak{r}W_n^E\otimes_{A_E}A_F)\otimes_x
\overline{\mathcal{K}}$, denoted by $\Pi_x$, is the base change lift
of a smooth representation $\pi_x$ of $G_n(F)$. In particular,
$\Pi_x^\gamma\simeq \Pi_x$, for all $\gamma\in \Gamma$. Since
$$x(\gamma(X,\mathcal{V},\tau,\psi_E))
=\gamma(X,\Pi_x,\tau,\psi_E),$$
and 
$$x(\gamma(X,\mathcal{V}^\gamma,\tau,\psi_E))
=\gamma(X,\Pi_x^\gamma,\tau,\psi_E),$$
we have
$$x(\gamma(X,\mathcal{V},\tau,\psi_E)) = 
x(\gamma(X,\mathcal{V}^\gamma,\tau,\psi_E)),$$ for all
$\overline{\mathcal{K}}$-points of $A_F$.  The lemma now follows since
$A_F$ is reduced and flat over $\Lambda$.
\end{proof}
From these, we deduce:
\begin{lemma}\label{inv_whit_model}
Let $\mathcal{V}$ be the co-Whittaker module
$e_{\mathfrak{r}}W_n^E\otimes_{A_E} A_F$. Then the Whittaker space
$\mathbb{W}(\mathcal{V}, \psi_{E,A_F})$ is invariant under the
action of ${\rm Gal}(E/F)$.
\end{lemma}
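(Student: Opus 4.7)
The plan is to deduce the Galois-invariance of $\mathbb{W}(\mathcal{V}, \psi_{E,A_F})$ as an essentially formal consequence of Lemma \ref{inv_1}, once one unpacks what the twist $\mathcal{V} \mapsto \mathcal{V}^\gamma$ does at the level of Whittaker models. The crucial structural ingredient, beyond Lemma \ref{inv_1}, is that $\psi_E = \psi_F \circ \mathrm{Tr}_{E/F}$ is manifestly $\Gamma$-invariant, so the associated character $\Psi_{E,A_F}$ of $N_n(E)$ is $\Gamma$-invariant. Hence the operator $(\gamma \cdot W)(g) := W(\gamma^{-1}(g))$ preserves the ambient induced representation $\mathrm{Ind}_{N_n(E)}^{G_n(E)}(\Psi_{E,A_F})$: if $W$ transforms by $\Psi_{E,A_F}$ on the left under $N_n(E)$, so does $\gamma \cdot W$, because $\Psi_{E,A_F}(\gamma^{-1}(u)) = \Psi_{E,A_F}(u)$ for every $u \in N_n(E)$.

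First I would verify that the canonical Whittaker map $\mathcal{V}^\gamma \to \mathrm{Ind}_{N_n(E)}^{G_n(E)}(\Psi_{E,A_F})$ factors through the twist, that is, $W \mapsto \gamma \cdot W$ composed with the Whittaker map of $\mathcal{V}$. This is a direct check: the $n$-th derivative is computed as the quotient of $\mathcal{V}$ by the submodule generated by elements of the form $\pi(u)v - \Psi_{E,A_F}(u)v$, $u \in N_n(E)$, and this submodule is preserved under the Galois twist precisely because $\Psi_{E,A_F}$ is $\Gamma$-invariant. Therefore $\mathcal{V}^{(n)}$ and $(\mathcal{V}^\gamma)^{(n)}$ share the same underlying $A_F$-module, and a fixed identification $\mathcal{V}^{(n)} \simeq A_F$ transports canonically to $(\mathcal{V}^\gamma)^{(n)} \simeq A_F$. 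Reading off the Whittaker map relative to these identifications yields the equality
\[
\mathbb{W}(\mathcal{V}^\gamma, \psi_{E,A_F}) \;=\; \{\, \gamma \cdot W : W \in \mathbb{W}(\mathcal{V}, \psi_{E,A_F})\,\}
\]
as subsets of $\mathrm{Ind}_{N_n(E)}^{G_n(E)}(\Psi_{E,A_F})$.

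Combining this with Lemma \ref{inv_1}, which asserts that $\mathbb{W}(\mathcal{V}, \psi_{E,A_F})$ and $\mathbb{W}(\mathcal{V}^\gamma, \psi_{E,A_F})$ coincide as subspaces of the same induced representation, we conclude that $\gamma \cdot \mathbb{W}(\mathcal{V}, \psi_{E,A_F}) = \mathbb{W}(\mathcal{V}, \psi_{E,A_F})$ for every $\gamma \in \Gamma$, which is exactly the required invariance. The argument is pure bookkeeping, with all the substance already contained in Lemma \ref{inv_1}; there is no genuine obstacle to overcome. The only point that needs care is the coherence of the identifications of the $n$-th derivatives under the twist, and this is automatic from the $\Gamma$-invariance of $\psi_E$ arising from its trace construction.
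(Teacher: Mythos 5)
Your argument is correct and follows essentially the same route as the paper's own proof: both exploit the $\Gamma$-invariance of $\Psi_{E,A_F}$ (coming from $\psi_E = \psi_F \circ \mathrm{Tr}_{E/F}$) to identify $\mathbb{W}(\mathcal{V}^\gamma, \psi_{E,A_F})$ with $\gamma \cdot \mathbb{W}(\mathcal{V}, \psi_{E,A_F})$ inside the common induced representation, and then invoke Lemma~\ref{inv_1} to conclude. The paper phrases this at the level of the single $A_F$-linear Whittaker functional $\mathcal{W}$ rather than at the level of the $n$-th derivative submodule, but the content is identical.
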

\begin{proof}
  Since $\mathcal{V}$ is co-Whittaker, we have the $A_F$-module
  isomorphism $\mathcal{V}^{(n)} \simeq A_F$. Precomposing this
  isomorphism with the quotient map
  $\mathcal{V}\rightarrow \mathcal{V}^{(n)}$ induces the $A_F$-linear
  map $\mathcal{W}:\mathcal{V}\rightarrow A_F$ with
$$
\mathcal{W}(n.v) = \Psi_{E,A_F}(n)\mathcal{W}(v),
$$
for all $v \in \mathcal{V}$ and $n\in N_n(E)$. Moreover, for any
$\gamma \in {\rm Gal}(E/F)$, we have
$$
\mathcal{W}(\gamma(n).v) = \Psi_{E,A_F}(\gamma(n))\mathcal{W}(v) =
\Psi_{E,A_F}(n)\mathcal{W}(v).
$$ 
Therefore, the Whittaker spaces of $\mathcal{V}$ and
$\mathcal{V}^\gamma$ with respect to the character $\psi_E$ are
induced by the same linear map $\mathcal{W}$. Let $W_v$ be an element
of $\mathbb{W}(\mathcal{V},\psi_{E,A_F})$. Then
$$
(\gamma^{-1}.W_v)(g) = \mathcal{W}(\gamma(g).v).
$$ 
This shows that
$\gamma^{-1}.W_v \in \mathbb{W}(\mathcal{V}^\gamma,\psi_{E,A_F})$. But
$\mathbb{W}(\mathcal{V}^\gamma,\psi_{E,A_F}) =
\mathbb{W}(\mathcal{V},\psi_{E,A_F})$, by Lemma \ref{inv_1}. This
completes the proof.
\end{proof}
We now prove the following theorem.
\begin{theorem}\label{Tate_coWhittaker}
Let $\mathcal{V}$ be the co-Whittaker $A_F[G_n(E)]$-module
$e_{\mathfrak{r}}W_n^E\otimes_{A_E} A_F$, and let $\mathcal{V}_F$ be
the $\mathbb{F}[G_n(F)]$-module
$e_{\mathfrak{s}}W_n^F\otimes_{\Lambda} \mathbb{F}$. Let $R$ be the
$\mathbb{F}$-algebra $A_F\otimes_{\Lambda}\mathbb{F}$. Then there is
a $G_n(F)$-stable subspace $\mathcal{M}$ of the Tate cohomology
group $\widehat{H}^0(\mathbb{W}(\mathcal{V},\psi_{E,A_F}))$
with $G_n(F)$-equivariant surjection
$$
\mathcal{M}\longrightarrow \mathbb{W}(\mathcal{V}_F^{(l)},\psi_{F,R}^l).
$$
\end{theorem}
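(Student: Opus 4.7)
The plan is to build a natural restriction-and-reduction map from $\widehat{H}^0(\mathbb{W}(\mathcal{V},\psi_{E,A_F}))$ to the induced space ${\rm Ind}_{N_n(F)}^{G_n(F)}(\Psi_{F,R}^l)$, and to identify $\mathbb{W}(\mathcal{V}_F^{(l)},\psi_{F,R}^l)$ inside its image using Rankin-Selberg theory together with the completeness of Whittaker models (Theorem \ref{van_result}); the subspace $\mathcal{M}$ is then taken to be the preimage of $\mathbb{W}(\mathcal{V}_F^{(l)},\psi_{F,R}^l)$. The overall strategy parallels \cite[Theorem 6.7]{dhar2022tate}, but is now carried out at the universal family level so that nilpotents in $A_F\otimes_\Lambda\mathbb{F}$ cause no obstruction.

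Since ${\rm Tr}_{E/F}(x)=lx$ for $x\in F$, the character $\psi_E|_F$ equals $\psi_F^l$. Hence for a $\Gamma$-invariant $W\in\mathbb{W}(\mathcal{V},\psi_{E,A_F})$ (a Tate-cohomology representative by Lemma \ref{inv_whit_model}), the restriction $W|_{G_n(F)}$ followed by the reduction $A_F\to R$ produces an element of ${\rm Ind}_{N_n(F)}^{G_n(F)}(\Psi_{F,R}^l)$. For any $W$, the norm $\sum_{i=0}^{l-1}\gamma^i\cdot W$, evaluated at $g\in G_n(F)$, equals $l\cdot W(g)$ since $\Gamma$ fixes $G_n(F)$ pointwise, and therefore vanishes modulo $l$. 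This yields a $G_n(F)$-equivariant map
$$
\overline{\Phi}:\widehat{H}^0(\mathbb{W}(\mathcal{V},\psi_{E,A_F}))\longrightarrow {\rm Ind}_{N_n(F)}^{G_n(F)}(\Psi_{F,R}^l).
$$
To show that the image of $\overline{\Phi}$ contains $\mathbb{W}(\mathcal{V}_F^{(l)},\psi_{F,R}^l)$, we test against Whittaker functions from the analogous family on $G_{n-1}$: for a primitive idempotent $e'\in\mathcal{Z}_{n-1}^F$ with $z_{E/F}$-partner $e''\in\mathcal{Z}_{n-1}^E$, the module $e''W_{n-1}^E\otimes_{e''\mathcal{Z}_{n-1}^E}e'\mathcal{Z}_{n-1}^F$ is co-Whittaker with $\Gamma$-invariant Whittaker space (by the argument of Lemma \ref{inv_whit_model}). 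For $\Gamma$-invariant $W$ on the $n$-side and $W'$ on the $(n-1)$-side, Remark \ref{rmk_int2} collapses the $E$-side Rankin-Selberg zeta integrals $c_r^E(W,W')$ to $F$-side integrals against $\overline{\Phi}(W)$ and $\overline{\Phi}(W')$. Combining the $E$-side and $F$-side functional equations with identity (\ref{mod_l_gamma_identity}), and using the commutativity $\mathbb{L}_\nu^F\circ z_{E/F}=B_{E/F}\circ\mathbb{L}_\nu^E$ to identify the resulting supercuspidal support on the $F$-side as $f_{\mathcal{V}_F^{(l)}}$, we conclude that the pairings of $\overline{\Phi}(W)$ against test functions on $G_{n-1}(F)$ match exactly those predicted by $\mathcal{V}_F^{(l)}$.

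Setting $\mathcal{M}=\overline{\Phi}^{-1}(\mathbb{W}(\mathcal{V}_F^{(l)},\psi_{F,R}^l))$, suppose some $\phi\in\mathbb{W}(\mathcal{V}_F^{(l)},\psi_{F,R}^l)$ is not in the image of $\overline{\Phi}|_\mathcal{M}$. Theorem \ref{van_result} then yields a primitive idempotent $e'$ and a Whittaker function $W'\in\mathbb{W}(e'W_{n-1}^F,\psi_F^{-l})$ with $\int_{X_F}\phi(g)\otimes W'(g)\,d\mu_F\neq 0$, but the matching above forces this integral to equal the mod-$l$ reduction of an $E$-side integral arranged to vanish by Galois invariance and the functional equation---a contradiction. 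Hence $\overline{\Phi}|_\mathcal{M}$ is the required surjection. The principal obstacle is precisely this matching step: one must simultaneously track the Frobenius substitution $X\mapsto X^l$ in gamma factors, the transfer of zeta integrals from $X_E$ to $X_F$ under Galois invariance, and the compatibility of $z_{E/F}$ with supercuspidal support maps, uniformly across the universal family $A_F$.
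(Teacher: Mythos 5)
Your proposal assembles the right ingredients (Rankin--Selberg zeta integrals, functional equations over $E$ and over $F$, the gamma factor identity (\ref{mod_l_gamma_identity}), Remark \ref{rmk_int2}, and completeness of Whittaker models via Theorem \ref{van_result}), but there are two structural gaps and a logical flaw in the final step.

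First, the paper's proof is an induction on $n$ with an explicit base case $n=1$ settled via local class field theory, and the inductive hypothesis is load-bearing: in the step from level $n-1$ to level $n$ one needs identity (\ref{induction_equ}), i.e.\ that the $(n-1)$-level map $\Phi_{n-1}$ is $w_{n-1}$-equivariant, so that the $E$-side zeta integral $c_r^E(\widetilde{W},\widetilde{W'})$ collapses (via Remark \ref{rmk_int2}) to an $F$-side integral against $\tau_F'^{(l)}(w_{n-1})V'$ in the Whittaker space of $\mathcal{V}_F'^{(l)}$. You note only that the $(n-1)$-level Whittaker space is $\Gamma$-invariant (which is Lemma \ref{inv_whit_model}, not the theorem itself), but never invoke the theorem at level $n-1$; without that, the matching you need between $E$-side and $F$-side functional equations is not set up, and the base case is nowhere established.

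Second, your map $\overline{\Phi}$ is defined by restriction to all of $G_n(F)$, landing in ${\rm Ind}_{N_n(F)}^{G_n(F)}(\Psi_{F,R}^l)$, and $\mathcal{M}$ is defined as the preimage of the full Whittaker space $\mathbb{W}(\mathcal{V}_F^{(l)},\psi_{F,R}^l)$. But the Rankin--Selberg integrals $c_r^F$ only probe the restriction of the first argument to the mirabolic $P_n(F)$, so the zeta-integral matching you describe can at best show that $\overline{\Phi}(W)|_{P_n(F)}$ agrees with a Kirillov function of $\mathcal{V}_F^{(l)}$; it cannot constrain the values of $\overline{\Phi}(W)$ off $P_n(F)$, and hence cannot show $\overline{\Phi}(W)\in\mathbb{W}(\mathcal{V}_F^{(l)},\psi_{F,R}^l)$. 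A priori your $\mathcal{M}$ could be tiny. The paper avoids this precisely by letting $\Phi_n$ land in the Kirillov model ${\rm Ind}_{N_n(F)}^{P_n(F)}(\Psi_{F,R}^l)$, defining $\mathcal{M}(\psi_F)$ as the preimage of the Kirillov space (a much weaker condition, easily seen to be nonzero), proving $P_n(F)$-equivariance for free, and then proving $w_n$-equivariance separately via (\ref{claim_1})--(\ref{claim_2}); since $P_n(F)$ and $w_n$ generate $G_n(F)$, this upgrades the map to $G_n(F)$-equivariance, after which surjectivity follows from the co-Whittaker property. Finally, your closing contradiction argument misuses Theorem \ref{van_result}: that theorem produces a $W'$ with nonzero pairing for \emph{any} nonzero $\phi$, not specifically for a $\phi$ outside the image of $\overline{\Phi}$, so there is no contradiction to be extracted from that alone; the correct use (as in the paper) is to apply the vanishing theorem to the \emph{difference} of the two candidate Kirillov functions once their zeta integrals are shown to agree.
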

\begin{proof}
We prove the above theorem using induction on the integer $n$.
\subsubsection{}
Let us consider the case $n=1$. Recall that $\mathcal{Z}_1^K$ is the
convolution $\Lambda$-algebra $C^\infty_c(K^\times, \Lambda)$. Let
$P_K$ be the prime to $l$ part of $\mathfrak{o}_K^\times$.  For any
character $\eta:P_K\rightarrow \Lambda^\times$, we get an idempotent
$e_\eta$ of $\mathcal{Z}_1^K$, and the $\Lambda$-algebra
$e_\eta\mathcal{Z}_1^K$ is equal to
$\mathcal{H}(K^\times, P_K, \eta)$--the endomorphism algebra of the
$\Lambda[E^\times]$ representation $\ind_{P_K}^{K^\times}\eta$. Local
class field theory gives a character $\nu$ of $I_K^{(l)}$ assocaited
with $\eta$.  The ring $R_\nu^K$ is isomorphic to
$\Lambda[E^\times/P_K]$ and $\mathbb{L}_\eta^K$ is the identity map.
Let $\chi:F^\times\rightarrow \mathbb{F}$ be a character and let
$\tilde{\chi}$ be the character $\chi\circ {\rm Nr}_{E/F}$.  The
characters $\chi$ and $\tilde{\chi}$ give rise to idempotents
$e_\mathfrak{s}$ and $e_\mathfrak{r}$ of $\mathcal{Z}_1^F$ and
$\mathcal{Z}_1^E$ respectively.  From local class field theory the map
$B_{E/F}$ is induced by the norm map
${\rm Nr}_{E/F}:E^\times \rightarrow F^\times$. The module
$e_{\mathfrak{r}}\mathcal{Z}_1^E$ also has a natural $E^\times$-action
and the identification
$$e_{\mathfrak{r}}\mathcal{Z}_1^E
\otimes_{z_{E/F}} e_{\mathfrak{s}}\mathcal{Z}_1^F\simeq
e_{\mathfrak{s}}\mathcal{Z}_1^F,$$ makes
$e_{\mathfrak{s}}\mathcal{Z}_1^F$ an
$e_{\mathfrak{s}}\mathcal{Z}_1^F[E^\times]$-module.  Thus, we have
$$
\widehat{H}^0(e_{\mathfrak{s}}\mathcal{Z}_1^F)\xrightarrow{\sim}
(e_{\mathfrak{s}}\mathcal{Z}_1^F\otimes_\Lambda \mathbb{F})^{(l)}.
$$ 

\subsubsection{}
So now, we assume that the result is true for $n-1$. We denote by
$\tau_E$ (resp. $\tau_F$) the action of $G_n(E)$ (resp. $G_n(F)$) on
the space $\mathcal{V}$ (resp. $\mathcal{V}_F$). Recall that
$\mathbb{W}(\mathcal{V},\psi_{E,A_F})$ is invariant under
$\Gamma$-action. Let $\Phi_n$ be the composite map
$$
\mathbb{K}(\mathcal{V},\psi_{E,A_F})^\Gamma \xrightarrow{\theta_\ell} 
{\rm Ind}_{N_n(E)}^{P_n(E)}(\Psi_{E,R}) \xrightarrow{{\rm res}_{P_n(F)}}
{\rm Ind}_{N_n(F)}^{P_n(F)}(\Psi_{F,R}^l),
$$
where the map $\theta_\ell$ is induced by the morphism
$A_F\rightarrow R$, sending $x$ to $x\otimes 1$. Note that the map
$\Phi_n$ factorizes through the Tate cohomology space
$\widehat{H}^0(\mathbb{K}(\mathcal{V},\psi_{E,A_F}))$. Let
$\mathcal{M}(\psi_F)$ be the inverse image of the Kirillov space
$\mathbb{K}(\mathcal{V}_F^{(l)},\psi_{F,R}^l)$ under $\Phi_n$. Note
that $\mathcal{M}(\psi_F)$ is non-zero, and it is stable under the
action of $P_n(F)$ with non-zero $P_n(F)$-equivariant map
$$
\Phi_n:\mathcal{M}(\psi_F)\longrightarrow \mathbb{K}(\mathcal{V}_F^{(l)},\psi_{F,R}^l).
$$
Let $V$ be an element of $\mathcal{M}(\psi_F)$. We will
show that
\begin{equation}\label{claim_1}
\Phi_n(\overline{\tau_E(w_n)}V)=\tau_F^{(l)}(w_n)\Phi_n(V),
\end{equation}
where $\overline{\tau_E(w_n)}$ is the induced action of $\tau_E(w_n)$
on $\widehat{H}^0(\mathbb{K}(\mathcal{V},\psi_{E,A_F}))$.
\subsubsection{}
Let $e_{\mathfrak{s}'}$ be an arbitrary primitive idempotent of
$\mathcal{Z}_{n-1}^F$ and let $\nu'$ be the $l$-inertial type
corresponding to $e_{\mathfrak{s}'}$. Then there exists a primitive
idempotent $e_{\mathfrak{r}'}$ of $\mathcal{Z}_{n-1}^E$, which
corresponds to the same $l$-inertial type $\nu'$. If $A_E'$
(resp. $A_F'$) denotes the Noetherian $\Lambda$-algebras
$e_{\mathfrak{r}'}\mathcal{Z}_{n-1}^E$
(resp. $e_{\mathfrak{s}'}\mathcal{Z}_{n-1}^F$), then we have the base
change map $z_{E/F}':A_E'\rightarrow A_F'$ with
\begin{equation}\label{commutative_2}
\mathbb{L}_{\nu'}^F\circ z_{E/F}' = B_{E/F}'\circ \mathbb{L}_{\nu'}^E.
\end{equation}
We denote by $\mathcal{V}'$ the co-Whittaker $A_F'[G_{n-1}(E)]$-module
$e_{\mathfrak{r}}'W_{n-1}^E\otimes_{A_E'} A_F'$. Let $R'$ be the
$\mathbb{F}$-algebra $A_F'\otimes_{\Lambda}\mathbb{F}$, and let
$\mathcal{V}_F'$ the co-Whittaker $R'[G_{n-1}(F)]$-module
$e_{\mathfrak{s}'} W_{n-1}^F\otimes_{\Lambda}\mathbb{F}$.  Using
induction hypothesis, there is a $G_{n-1}(F)$-stable subspace
$\mathcal{M}'(\psi_F)$ of the Tate cohomology group
$\widehat{H}^0(\mathbb{W}(\mathcal{V}',\psi_{E,A_F'}^{-1}))$ such that
the non-zero $P_n(F)$-equivariant map
$$
\Phi_{n-1} : \mathbb{K}(\mathcal{V}',\psi_{E,A_F'}^{-1})^\Gamma
\xrightarrow {\theta_\ell'}{\rm
  Ind}_{N_{n-1}(E)}^{P_{n-1}(E)}(\Psi_{E,R'}^{-1})\xrightarrow{{\rm
    res}_{P_{n-1}(F)}} {\rm
  Ind}_{N_{n-1}(F)}^{P_{n-1}(F)}(\Psi_{F,R'}^{-l}),
$$ 
gives the $G_n(F)$-equivariant surjection
$$
\mathcal{M}'(\psi_F) \longrightarrow
\mathbb{W}(\mathcal{V}_F',\psi_{F,R'}^{-l}),
$$
which we also denote by $\Phi_{n-1}$. Here, the map $\theta_\ell'$ is
induced by the morphism $A_F'\rightarrow R'$, sending $\alpha$ to
$\alpha\otimes 1$. We denote by $\tau_E'$ (resp. $\tau_F'$) the action
of $G_{n-1}(E)$ (resp. $G_{n-1}(F)$) on the space $\mathcal{V}'$
(resp. $\mathcal{V}_F'$).

\subsubsection{}
Let $V'$ be an element of
$\mathbb{W}(\mathcal{V}_F'^{(l)},\psi_{F,R'}^{-l})$. Then there exists
an element $W'$ in the Whittaker space
$\mathbb{W}(\mathcal{V}',\psi_{E,A_F'})^\Gamma$ such that
${\rm res}_{P_{n-1}(E)}(W')$ is mapped to ${\rm res}_{P_{n-1}(F)}(V')$
under $\Phi_{n-1}$. Let $W$ be the element of
$\mathbb{W}(\mathcal{V},\psi_{E,A_F})^\Gamma$ such that
${\rm res}_{P_n(E)}(W) = V$. Then, using functional equation over $E$,
we get
$$
\sum_{k\in\mathbb{Z}}c_k^E(\widetilde{W}, \widetilde{W'})X^{-fk} =
\varpi_{\tau_E'}(-1)^{n-1}\gamma(X,\mathcal{V},\mathcal{V}',\psi_E)
\sum_{k\in\mathbb{Z}}c_k^E(W,W')X^{fk},
$$
where $f$ is the residue degree of the extension $E/F$. Applying the
morphism $(\theta_\ell\otimes\theta_\ell')$, and using the identity
(\ref{formulation_zeta}) and Remark \ref{rmk_int2}, the above relation
becomes
\begin{equation}\label{fe_1}
\begin{split}
  \sum_{k\in\mathbb{Z}}c_{-k}^F\big(\overline{\tau_E(w_n)}
  &\theta_\ell(W),\overline
  {\tau_E'(w_{n-1})}\theta_\ell'(W')\big)X^{-lk}\\
  &= \varpi_{\tau_E'}(-1) ^{n-1} (\theta_\ell
  \otimes\theta_\ell')(\gamma(X,\mathcal{V},\mathcal{V}',\psi_E))
  \sum_{k\in\mathbb{Z}} c_k^F\big(\theta_\ell(W),V'\big)X^{lk}.
\end{split}
\end{equation}
By induction hypothesis, we have
$$ \Phi_{n-1}(\overline{\tau_E'(w_{n-1})}W') = 
\tau_F'^{(l)}(w_{n-1})V'.
$$
Using this, it follows from (\ref{fe_1}) that
\begin{equation}\label{fe_2}
\begin{split}
  \sum_{k\in\mathbb{Z}}c_{-k}^F\big(\overline{\tau_E(w_n)}&\theta_\ell(W),
  \tau_F'^{(l)}(w_{n-1})V'\big)X^{-lk} \\
  &= \varpi_{\tau_F'}(-1) ^{l(n-1)} (\theta_\ell
  \otimes\theta_\ell')(\gamma(X,\mathcal{V},\mathcal{V}',\psi_E))
  \sum_{k\in\mathbb{Z}} c_k^F(\theta_\ell(W),V')X^{lk}.
\end{split}
\end{equation}
\subsubsection{}
Recall that $\Phi_n(V)$ is an element of the Kirillov space
$\mathbb{K}(\mathcal{V}_F^{(l)},\psi_{F,R}^l)$. Let $U$ be the element
of $\mathbb{W}(\mathcal{V}_F^{(l)},\psi_{F,R}^l)$ such that
${\rm res}_{P_n(F)}(U)=\Phi_n(V)$.  By Theorem \ref{van_result}, the
assertion (\ref{claim_1}) is equivalent to the following identity:
\begin{equation}\label{claim_2}
\begin{split}
  \sum_{k\in\mathbb{Z}}c_{-k}^F\big(\Phi_n(\overline{\tau_E(w_n)}\theta_\ell(W)),
  \tau_F'^{(l)}&(w_{n-1})V'\big)X^{-lk}\\ =&
  \sum_{k\in\mathbb{Z}}c_{-k}^F\big(\tau_F^{(l)}(w_n)U,
  \tau_F'^{(l)}(w_{n-1})V'\big)X^{-lk}
\end{split}
\end{equation}
From functional equation over F, we get
\begin{align*}
  \sum_{k\in\mathbb{Z}}c_k^F(\widetilde{U},
  \widetilde{V'})X^{-k}=
  \varpi_{\tau_F'}(-1)^{l(n-1)}\gamma(X,\mathcal{V}_F^{(l)},\mathcal{V}_F'^{(l)},\psi_F^l)
  \sum_{k\in\mathbb{Z}}c_k^F(U,V')X^k.
\end{align*}
Using the relation (\ref{formulation_zeta}) and replacing the variable
$X$ by $X^l$ to the above equality, we have
\begin{align*}
\begin{split}
  \sum_{k\in\mathbb{Z}}c_{-k}^F\big(\tau_F^{(l)}(w_n)U,\ &
  \tau_F'^{(l)}(w_{n-1})V'\big)X^{-lk} \\
  = &\varpi_{\tau_F'}(-1) ^{l(n-1)}
  \gamma(X,\mathcal{V}_F^{(l)},\mathcal{V}_F'^{(l)},\psi_F^l)
  \sum_{k\in\mathbb{Z}}c_k^F(U,V')X^{lk}.
\end{split}    
\end{align*}
\subsubsection{}
Comparing the above equation with (\ref{fe_2}), the equality
(\ref{claim_2}) is now equivalent to the following identity of gamma
factors
$$
(\theta_\ell
\otimes\theta_\ell')(\gamma(X,\mathcal{V},\mathcal{V}',\psi_E)) =
\gamma(X,\mathcal{V}_F^{(l)},\mathcal{V}_F'^{(l)}, \psi_F^l).
$$
First, note that the base change maps $B_{E/F}$ and $B_{E/F}'$
together with the universal property of the pairs
$(R_\nu^E,\rho_\nu^E)$ and $(R_{\nu'}^E,\rho_{\nu'}^F)$ induces the
isomorphisms
\begin{center}
  ${\rm res}_{\mathcal{W}_E}(\rho_\nu^F)\simeq
  \rho_\nu^E\otimes_{R_\nu^E}R_\nu^F$ and
  ${\rm res}_{\mathcal{W}_E}(\rho_{\nu'}^F)\simeq
  \rho_{\nu'}^E\otimes_{R_{\nu'}^E}R_{\nu'}^F$.
\end{center}
Using these isomorphisms and the commutativity relations
(\ref{commutative_1}) and (\ref{commutative_2}), we have
\begin{align*}
  \gamma(X,\mathcal{V},\mathcal{V}',\psi_E) 
  &= (z_{E/F}\circ(\mathbb{L}_\nu^E)^{-1})\otimes
    (z_{E/F}'\circ(\mathbb{L}_{\nu'}^E)^{-1})\big(\gamma(X,\rho_\nu^E\otimes
    \rho_{\nu'}^E,\psi_E)\big)\\
  &= ((\mathbb{L}_\nu^F)^{-1}\circ B_{E/F})\otimes ((\mathbb{L}_{\nu'}^F)^{-1}
    \circ B_{E/F}')(\gamma(X,\rho_{\nu}^E\otimes\rho_{\nu'}^E,\psi_E))\\
  &= ((\mathbb{L}_\nu^F)^{-1}\otimes(\mathbb{L}_{\nu'}^F)^{-1})
    \big(\gamma(X,(\rho_\nu^E\otimes_{R_\nu^E}R_\nu^F)\otimes
    (\rho_{\nu'}^E\otimes_{R_{\nu'}^E}R_{\nu'}^F),\psi_E)\big)\\
  &= ((\mathbb{L}_\nu^F)^{-1}\otimes(\mathbb{L}_{\nu'}^F)^{-1})
    \big(\gamma(X,\rho_\nu^F\otimes\rho_{\nu'}^F\otimes
    {\rm ind}_{\mathcal{W}_E}^{\mathcal{W}_F}(1_E),\psi_F)\big)\\
  &= \prod_{\eta}\gamma(X,e_{\mathfrak{s}}W_n^F,
    e_{\mathfrak{s}'}W_{n-1}^F\otimes\eta,\psi_F),
\end{align*}
where $\eta$ runs over the characters of ${\rm Gal}(E/F)$. Finally,
applying the morphism $(\theta_\ell \otimes\theta_\ell')$ and using
the identity (\ref{mod_l_gamma_identity}), we get
$$
(\theta_\ell \otimes \theta_\ell')(\gamma(X,\mathcal{V},\mathcal{V}',\psi_E)) =
\gamma(X^l,\mathcal{V}_F^{(l)},\mathcal{V}_F'^{(l)},
\psi_F^l).
$$
This shows that $\Phi_n$ is a non-zero $G_n(F)$-equivariant map. Since
$\mathcal{V}_F^{(l)}$ is co-Whittaker $R[G_n(F)]$-module, the map
$\Phi_n$ is infact a surjection. This completes the proof.
\end{proof}

\section{Tate cohomology of generic representations}
In this section, we prove our main result (Theorem
\ref{intro_main_thm}) using Theorem \ref{Tate_coWhittaker}. We will
continue with the notations of the preceding section.
\subsection{}
Let $\pi_F$ be an integral generic representation of $G_n(F)$ over a
$\overline{\mathcal{K}}$-vector space whose mod-$l$ inertial
supercuspidal support is $\mathfrak{s}$. Let $\pi_E$ be the base
change lifting of $\pi_F$ with mod-$l$ inertial supercuspidal support
$\mathfrak{r}$. Recall that we have the base change map
$$
z_{E/F} : e_{\mathfrak{r}}\mathcal{Z}_n^E \longrightarrow
e_{\mathfrak{s}}\mathcal{Z}_n^F.
$$
If $f_{\pi_F}$ (resp. $f_{\pi_E}$) denotes the supercuspidal support
of $\pi_F$ (resp. $\pi_E$), then we have
$$
f_{\pi_E} = f_{\pi_F} \circ z_{E/F}.
$$
Let $J_\ell(\pi_F)$ be the unique generic sub-quotient of the mod-$l$
reduction $r_\ell(\pi_F)$. The supercuspidal support of the
$\mathbb{F}$-representation $J_\ell(\pi_F)$ is equal to $\mathfrak{s}$
(\cite[Proposition 4.13]{Helm_Bernstein_center}). We denote by $A_E$
and $A_F$ the Noetherian $\Lambda$-algebras
$e_{\mathfrak{r}}\mathcal{Z}_n^E$ and
$e_{\mathfrak{s}}\mathcal{Z}_n^F$ respectively.
\begin{theorem}
Let $F$ be a $p$-adic field and let $E$ be a finite Galois extension
of $F$ with $[E:F]=l$, where $l$ and $p$ are distinct odd
primes. Let $\pi_F$ be an integral generic
$\overline{\mathcal{K}}$-representation of $G_n(F)$, and let $\pi_E$
be the base change lifting of $\pi_F$ to $G_n(E)$. Let
$\mathbb{W}^0(\pi_E,\psi_{E,\overline{\mathcal{K}}})$ be the
space of all $\mathcal{O}$-valued functions in
$\mathbb{W}(\pi_E,\psi_{E,\overline{\mathcal{K}}})$. Then the
$\mathbb{F}$-representation $J_\ell(\pi_F)^{(l)}$ is a subquotient
of the Tate cohomology group
$\widehat{H}^0(\mathbb{W}^0(\pi_E,\psi_{E,
\overline{\mathcal{K}}}))$.
\end{theorem}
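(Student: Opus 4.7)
The plan is to deduce the theorem from the families result Theorem \ref{Tate_coWhittaker} by specializing the universal co-Whittaker module at the integral $A_F$-point determined by $\pi_F$. Integrality of $\pi_F$ yields a homomorphism $f_{\pi_F}\colon A_F\to \mathcal{O}$, whose reduction modulo the maximal ideal of $\mathcal{O}$ extends to $\overline{f}\colon R\to \mathbb{F}$ which, by \cite[Proposition 4.13]{Helm_Bernstein_center}, is the supercuspidal support of $J_\ell(\pi_F)$. Since $f_{\pi_E} = f_{\pi_F}\circ z_{E/F}$, the module $\mathcal{V}\otimes_{A_F, f_{\pi_F}}\mathcal{O}$ is a co-Whittaker $\mathcal{O}[G_n(E)]$-module whose Whittaker space identifies with $\mathbb{W}^0(\pi_E, \psi_{E, \overline{\mathcal{K}}})$. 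Evaluating $A_F$-valued Whittaker functions via $f_{\pi_F}$ thus produces a $G_n(E)$-equivariant morphism $\mathcal{S}\colon \mathbb{W}(\mathcal{V}, \psi_{E, A_F})\to \mathbb{W}^0(\pi_E, \psi_{E, \overline{\mathcal{K}}})$, which I expect to be $\Gamma$-equivariant by Lemma \ref{inv_whit_model} together with the fact that $f_{\pi_F}$ factors through the $\Gamma$-invariant subring $A_F$. Consequently, $\mathcal{S}$ descends to a $G_n(F)$-equivariant map $\widehat{\mathcal{S}}$ on zeroth Tate cohomology.

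Next, I would invoke Theorem \ref{Tate_coWhittaker} to obtain a $G_n(F)$-stable subspace $\mathcal{M}\subseteq \widehat{H}^0(\mathbb{W}(\mathcal{V}, \psi_{E, A_F}))$ together with a $G_n(F)$-equivariant surjection $\Phi\colon \mathcal{M}\twoheadrightarrow \mathbb{W}(\mathcal{V}_F^{(l)}, \psi_{F, R}^l)$. Specializing the target via $\overline{f}\colon R\to \mathbb{F}$ gives a co-Whittaker $\mathbb{F}[G_n(F)]$-module with supercuspidal support $\mathrm{Fr}\circ \overline{f}$, which is exactly the support of $J_\ell(\pi_F)^{(l)}$. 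Thus the specialized target admits $J_\ell(\pi_F)^{(l)}$ as its unique generic subquotient. Since the construction of $\Phi$ in Theorem \ref{Tate_coWhittaker} is built from the map $\theta_\ell\colon A_F\to R$ and restriction to $P_n(F)$, both of which are natural under the further reduction $R\to \mathbb{F}$ induced by $f_{\pi_F}$, tracing the resulting commutative diagram shows that the image $\widehat{\mathcal{S}}(\mathcal{M})\subseteq \widehat{H}^0(\mathbb{W}^0(\pi_E, \psi_{E, \overline{\mathcal{K}}}))$ admits $J_\ell(\pi_F)^{(l)}$ as a subquotient, yielding the desired conclusion.

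The hard part will be the careful bookkeeping of the specializations, and in particular verifying that the Frobenius twist on the target of $\Phi$ aligns correctly with specialization at $f_{\pi_F}$ so that $J_\ell(\pi_F)^{(l)}$ (rather than $J_\ell(\pi_F)$) appears in the conclusion. A related subsidiary subtlety is the identification of the Whittaker space of $\mathcal{V}\otimes_{A_F, f_{\pi_F}}\mathcal{O}$ with $\mathbb{W}^0(\pi_E, \psi_E)$: both are $G_n(E)$-stable $\mathcal{O}$-lattices in the same $\overline{\mathcal{K}}$-valued Whittaker space, and matching them canonically will require the uniqueness of stable lattices in an irreducible representation together with a normalization through the $n$-th derivative. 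Once these compatibilities are established, the conclusion follows from chasing the resulting commutative diagram.
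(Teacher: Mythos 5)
The proposal takes a genuinely different route from the paper. The paper re-runs the Rankin--Selberg functional-equation argument directly on the concrete lattice $\mathbb{W}^0(\pi_E,\psi_{E,\overline{\mathcal{K}}})$: it defines $\Phi_n$ via pointwise mod-$l$ reduction $r_\ell$ followed by restriction to $P_n(F)$, takes $\mathcal{N}(\psi_F)$ to be the preimage of the Kirillov space of $J_\ell(\pi_F)^{(l)}$, and establishes $G_n(F)$-equivariance of $\Phi_n$ by verifying the $w_n$-intertwining identity (\ref{claim_3}) through an explicit gamma-factor computation. In that computation, Theorem \ref{Tate_coWhittaker} is invoked only in its $G_{n-1}$ incarnation, to supply test vectors $W'$ satisfying (\ref{induction_equ}); the $G_n$ statement is not specialized. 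You, by contrast, propose to specialize Theorem \ref{Tate_coWhittaker} for $G_n$ itself along $f_{\pi_F}\colon A_F\to\mathcal{O}$ and push through a commutative diagram. Your diagram chase does work: the map $\mathcal{S}$ given by pointwise $f_{\pi_F}$ is $\Gamma$-equivariant simply because $\Gamma$ acts by precomposition on the source of the Whittaker functions (Lemma \ref{inv_whit_model} is needed only for stability of the domain), and the only point requiring verification -- that $\mathrm{res}_{P_n(F)}\circ\overline{f}\circ\theta_\ell$ kills $\ker(\widehat{\mathcal{S}}|_{\mathcal{M}})$ -- follows since for $\Gamma$-fixed arguments the norm $N_\Gamma$ is multiplication by $l$, which vanishes after reduction; consequently the induced map $\widehat{\mathcal{S}}(\mathcal{M})\to \mathbb{W}(J_\ell(\pi_F)^{(l)},\psi_{F,\mathbb{F}}^l)$ is a $G_n(F)$-equivariant surjection. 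The Frobenius-twist bookkeeping you flag is resolved by the fact that $\overline{f}$ is a homomorphism of $\mathbb{F}_l$-algebras, so it commutes with $x\mapsto x^l$, giving $\overline{f}\circ\mathrm{Fr}=\mathrm{Fr}\circ\overline{f}$. What the paper's route buys is a self-contained and fully explicit identification of the subspace $\mathcal{N}(\psi_F)$ and of the $w_n$-intertwining, phrased entirely in terms of the specific lattice; what your route buys is a cleaner deduction, at the cost of having to justify the compatibility of Whittaker spaces with base change (that $\mathbb{W}(\mathcal{V}\otimes_{A_F}\mathcal{O},\psi_{E,\mathcal{O}}) = \mathbb{W}^0(\pi_E,\psi_E)$, which does hold by the supercuspidal-support characterization of Whittaker spaces for co-Whittaker modules, together with the fact that $\mathbb{W}^0$ is itself co-Whittaker).
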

\begin{proof}
The proof relies on the completeness of Whittaker models (Theorem
\ref{van_result}). Although the proof follows from the same line of
arguments as in Theorem \ref{Tate_coWhittaker}, we provide it in
detail for the purpose of the completeness. We follow the same
notations and terminologies as in Theorem \ref{Tate_coWhittaker}.
\subsubsection{}
Note that the lattice
$\mathbb{W}^0(\pi_E,\psi_{E,\overline{\mathcal{K}}})$ is stable
under the action of $G_n(E)$
(\cite[Theorem]{Vigneras_integral_structure}). Let $\Phi_n$ be the
composite map
$$
\mathbb{K}^0(\pi_E,\psi_{E,\overline{\mathcal{K}}})^\Gamma
\xrightarrow{r_\ell}{\rm
  Ind}_{N_n(E)}^{P_n(E)}(\Psi_{E,\mathbb{F}}^l)\xrightarrow{{\rm
    res}_{P_n(F)}} {\rm
  Ind}_{N_n(F)}^{P_n(F)}(\Psi_{F,\mathbb{F}}^l),
$$
where $r_\ell$ denotes the pointwise mod-$l$ reduction. It is clear that the map
$\Phi_n$ factorizes through the space
$\widehat{H}^0(\mathbb{K}^0(\pi_E,\psi_{E,\overline{\mathcal{K}}}))$. Let
$\mathcal{N}(\psi_F)$ be the inverse image of the Kirillov space
$\mathbb{K}(J_\ell(\pi_F)^{(l)},\psi_{F,\mathbb{F}}^l)$ under
$\Phi_n$. Then $\mathcal{N}(\psi_F)$ is a non-zero $P_n(F)$-stable
subspace of
$\widehat{H}^0(\mathbb{K}^0(\pi_E,\psi_{E,\overline{\mathcal{K}}}))$
with a non-zero $P_n(F)$-equivariant map
$$
\Phi_n:\mathcal{N}(\psi_F)\longrightarrow
\mathbb{K}(J_\ell(\pi_F)^{(l)},\psi_{F,\mathbb{F}}^l).
$$
We will prove that $\mathcal{N}(\psi_F)$ is $G_n(F)$-stable and the map
$\Phi_n$ is $G_n(F)$-equivariant. To be precise, let $V$ be an
element of $\mathcal{N}(\psi_F)$. We will show that
$$
\Phi_n(\overline{\pi_E(w_n)}V) = J_\ell(\pi_F)^{(l)}(w_n)\Phi_n(V),
$$
where $\overline{\pi_E(w_n)}$ is the linear operator on
$\widehat{H}^0(\mathbb{K}(\pi_E,\psi_{E,\overline{\mathcal{K}}}))$
induced by $\pi_E(w_n)$. Let $W$ be an element of
$\mathbb{W}^0(\pi_E,\psi_{E,\overline{\mathcal{K}}})^\Gamma$ with ${\rm res}_{P_n(E)}(W) =
V$. By Theorem \ref{van_result}, the above assertion is equivalent to
the following identity:
\begin{equation}\label{claim_3}
\sum_{k\in\mathbb{Z}} c_k^F\big(\Phi_n(\overline{\pi_E(w_n)}V), 
V'\big)X^k = \sum_{k\in\mathbb{Z}} c_k^F\big(J_\ell(\pi_F)^{(l)}(w_n)\Phi_n(V), V'\big)X^k,
\end{equation}
for all
$V'\in\mathbb{W}\big((e_{\mathfrak{s}'}W_{n-1}^F\otimes_{\Lambda}\mathbb{F})^{(l)},
\psi_{F,R'}^{-l}\big)$ and for all primitive idempotents
$e_{\mathfrak{s}'}$ of $\mathcal{Z}_{n-1}^F$. Here $R'$ denotes the ring
$e_{\mathfrak{s}'}\mathcal{Z}_{n-1}^F\otimes_{\Lambda}\mathbb{F}$.
\subsubsection{}
Let $e_{\mathfrak{s}'}$ be a primitive idempotent of
$\mathcal{Z}_{n-1}^F$. Then there exists a primitive idempotent
$e_{\mathfrak{r}'}$ of $\mathcal{Z}_{n-1}^E$ such that we have the
base change map $z_{E/F}' : A_E'\rightarrow A_F'$ with
$$
\mathbb{L}_{\nu'}^F\circ z_{E/F}' = B'_{E/F} \circ \mathbb{L}_{\nu'}^E,
$$
where $\nu'$ is the $l$-inertial type, which corresponds to both
$e_{\mathfrak{r}'}$ and $e_{\mathfrak{s}'}$. As before, we denote by
$\mathcal{V}'$ and $\mathcal{V}_F'$ the co-Whittaker modules
$e_{\mathfrak{r}'}W_{n-1}^E\otimes_{A_E'} A_F'$ and
$e_{\mathfrak{s}'}W_{n-1}^F \otimes_{\Lambda} \mathbb{F}$
respectively. The action of $G_n(E)$ (resp. $G_n(F)$) on the space
$\mathcal{V}'$ (resp. $\mathcal{V}_F'$) is denoted by $\pi_E'$
(resp. $\pi_F'$).
\subsubsection{}
Let $V'$ be an element of
$\mathbb{W}(\mathcal{V}_F'^{(l)}, \psi_{F,R'}^{-l})$. Following
the arguments of Theorem \ref{Tate_coWhittaker}, we get an element
$W'$ in $\mathbb{W}(\mathcal{V}',\psi_{E,A_F'}^{-1})^\Gamma$ such
that $W'$ is mapped to $V'$ under the composition
$$
\Phi_{n-1} : \mathbb{W}(\mathcal{V}',\psi_{E,A_F'}^{-1})^\Gamma
\xrightarrow{\theta_\ell'} {\rm
  Ind}_{N_{n-1}(E)}^{P_{n-1}(E)}(\Psi_{E,R'}^{-1}) \xrightarrow
{{\rm res}_{G_{n-1}(F)}} {\rm
  Ind}_{N_{n-1}(F)}^{G_{n-1}(F)}(\Psi_{F,R'}^{-l}),
$$
with the following identity
\begin{equation}\label{induction_equ}
\Phi_{n-1}(\pi_E'(w_{n-1})W') = \pi_F'^{(l)}(w_{n-1})V'.
\end{equation}
Recall that the map $\theta_\ell'$ is induced by the morphism $A_F'\rightarrow R'$,
sending $x$ to $x\otimes 1$. From functional equation over $E$, we get
\begin{align*}
    \sum_{k\in\mathbb{Z}} c_{-k}^E\big(\overline{\pi_E(w_n)}&r_l(W),
\pi_E'(w_{n-1})\theta_\ell'(W')\big)X^{-fk} \\
&=\varpi_{\pi_E'}(-1)^{n-1}(r_\ell\otimes\theta_\ell')\big(\gamma(X,\pi_E,\mathcal{V}',
\psi_E)\big) \sum_{k\in\mathbb{Z}} c_k^E\big(r_l(W),V'\big)X^{fk}.
\end{align*}
Using Remark \ref{rmk_int2} and the relation (\ref{induction_equ}), it
follows from the above identity that
\begin{equation}\label{integral_FE}
\begin{split}
 \sum_{k\in\mathbb{Z}} c_{-k}^F\big(\overline{\pi_E(w_n)}&r_l(W),
\pi_F'^{(l)}(w_{n-1})V'\big)X^{-lk} \\
&=\varpi_{\pi_F'}(-1)^{l(n-1)}(r_\ell\otimes\theta_\ell')\big(\gamma(X,\pi_E,\mathcal{V}',
\psi_E)\big) \sum_{k\in\mathbb{Z}} 
c_k^F(r_l(W),V')X^{lk}.   
\end{split}
\end{equation}
\subsubsection{}
Let $U$ be an element of
$\mathbb{W}(J_\ell(\pi_F)^{(l)},\psi_{F,\mathbb{F}}^{-l})$ such
that ${\rm res}_{P_n(F)}(U) = \Phi_n(V)$. The functional equation over
$F$ gives
\begin{align*}
\begin{split}
 \sum_{k\in\mathbb{Z}} c_{-k}^F\big(J_\ell(\pi_F)^{(l)}&(w_n)U,
\pi_F'^{(l)}(w_{n-1})V'\big)X^{-k}\\
&= \varpi_{\pi_F'}(-1)
^{l(n-1)}\gamma(X,J_\ell(\pi_F)^{(l)},
\mathcal{V}_F'^{(l)}, \psi_F^l) \sum_{k\in\mathbb{Z}} c_k^F(U,V')X^k   \end{split}
\end{align*}
Replacing $X$ by $X^l$ to the above equation, we get
\begin{equation}\label{FE_2}
\begin{split}
 \sum_{k\in\mathbb{Z}} c_{-k}^F\big(J_\ell(\pi_F)^{(l)}&(w_n)U,
  \pi_F'^{(l)}(w_{n-1})W'\big)X^{-lk} \\
  &= \varpi_{\pi_F'}(-1)
  ^{l(n-1)} \gamma(X^l,J_\ell(\pi_F)^{(l)},\mathcal{V}_F'^{(l)},
  \psi_F^l) \sum_{k\in\mathbb{Z}} c_k^F(U,W')X^{lk}.   
\end{split}
  \end{equation}
\subsubsection{}
Comparing the relations (\ref{integral_FE}) and (\ref{FE_2}), the
assertion (\ref{claim_3}) is now equivalent to the following identity
of gamma factors
$$
(r_\ell\otimes\theta_\ell')\big(\gamma(X,\pi_E,
\mathcal{V}',\psi_E)\big) =
\gamma(X^l,J_\ell(\pi_F)^{(l)},
\mathcal{V}_F'^{(l)},\psi_F^l).
$$
This follows from similar type of computation of gamma factors as we
did in Theorem \ref{Tate_coWhittaker}. First, note that
$$
\gamma(X,\pi_E,\mathcal{V}',\psi_E) = 
(f_{\pi_E}\otimes z_{E/F}')(\gamma(X,e_{\mathfrak{r}}W_n^E,e_{\mathfrak{r}'}W_{n-1}^E,\psi_E)).
$$
Using the relations (\ref{commutative_1}) and (\ref{commutative_2})
and the fact that $f_{\pi_E} = f_{\pi_F}\circ z_{E/F}$, we get the
following equality:
$$
\gamma(X,\pi_E,\mathcal{V}',\psi_E) 
= \prod_{\eta} \gamma(X,\pi_F,e_{\mathfrak{s}'}
W_{n-1}^F\otimes 
\eta,\psi_F),
$$
where $\eta$ runs over the characters of ${\rm Gal}(E/F)$. Applying
the morphism $(r_\ell\otimes \theta_\ell')$ to the above identity, we
get
$$
(r_\ell\otimes \theta_\ell')(\gamma(X,\pi_E,\mathcal{V}',\psi_E)) =
\gamma(X,J_\ell(\pi_F),\mathcal{V}_F',
\psi_F)^l.
$$
Finally, the identity (\ref{univ_gamma_factors})
gives 
$$
(r_\ell\otimes \theta_\ell')(\gamma(X,\pi_E,\mathcal{V}',\psi_E)) =
\gamma(X^l,J_\ell(\pi_F)^{(l)}, \mathcal{V}_F'^{(l)},\psi_F^l).
$$
Thus, the space $\mathcal{N}(\psi_F)$ is stable under the action of
$G_n(F)$ and the map $\Phi_n$ is surjective. Now, using
\cite[Proposition 6.3]{dhar2022tate}, we get that there is a unique
generic subquotient of
$\widehat{H}^0(\mathbb{W}(\pi_E,\psi_{E,\overline{\mathcal{K}}})$,
and this is necessarily equal to $J_\ell(\pi_F)^{(l)}$.  This
completes the proof.
\end{proof}

\bibliography{fam} 
\bibliographystyle{amsalpha}

\noindent
Santosh Nadimpalli, \\
\texttt{nvrnsantosh@gmail.com}, \texttt{nsantosh@iitk.ac.in}.\\
Sabyasachi Dhar,\\
\texttt{mathsabya93@gmail.com},
\texttt{sabya@iitk.ac.in}\\
Department of Mathematics and Statistics,
Indian Institute of Technology Kanpur,
U.P. 208016, India.
\end{document}